\documentclass[12pt]{amsart}
\usepackage{amsmath,amsfonts,amssymb,amsthm}
\usepackage{graphicx,color}
\DeclareMathRadical{\sqrtsign}{symbols}{"70}{largesymbols}{"70}
%

%
%
%
%


%

%
%
\newlength{\figboxwidth}             
\setlength{\figboxwidth}{5.8in}

%

\setlength{\textwidth}{6.0in}
\setlength{\oddsidemargin}{0.25in}
\setlength{\evensidemargin}{0.25in}


%

%




\newcommand{\grad}{\nabla}










\def\@ifundefined#1#2#3%
  {\expandafter\ifx\csname#1\endcsname\relax#2\else#3\fi}

\@ifundefined{theoremstyle}{
}{
\theoremstyle{plain} 
}
\newtheorem{theorem}{Theorem}[section]

\newtheorem{proposition}[theorem]{Proposition}
\newtheorem{lemma}[theorem]{Lemma}

\@ifundefined{theoremstyle}{
}{
\theoremstyle{definition} 
}
\newtheorem{definition}[theorem]{Definition}

\newtheorem{remark}[theorem]{Remark}


%

\mathchardef\GG="321D
%



%


\newcommand{\mcc}[1]{{}}

\numberwithin{equation}{section}


\title[Hausdorff dimension across generic dynamical spectra II]
{Continuity of Hausdorff dimension across generic dynamical Lagrange and Markov spectra II}


\author[A. Cerqueira]{A. Cerqueira}
\thanks{A.C. was partially supported by CNPq-Brazil. Also, she thanks the hospitality of Coll\`ege de France and IMPA-Brazil during the preparation of this article.}
\address{Aline Cerqueira: 
IMPA, Instituto de Matem\'atica Pura e Aplicada,   
Estrada Dona Castorina, Jardim Bot\^anico, 
Rio de Janeiro, RJ, CEP 22460-320, Brazil
}
\email{alineagc@gmail.com}


\author[C. G. Moreira]{C. G. Moreira}
\thanks{C.G.M. was partially supported by CNPq-Brazil.}
\address{Carlos G. Moreira: 
IMPA, Instituto de Matem\'atica Pura e Aplicada,   
Estrada Dona Castorina, Jardim Bot\^anico, 
Rio de Janeiro, RJ, CEP 22460-320, Brazil
}
\email{gugu@impa.br}

\author[S. Roma\~na]{S. Roma\~na}
\address{Sergio Roma\~na: UFRJ, 
Universidade Federal do Rio de Janeiro, Av. Athos da Silveira Ramos 149, Centro de Tecnologia (Bloco C),  Cidade Universit\'aria, Ilha do Fund\~ao, Rio de Janeiro, RJ, CEP 21941-909, Brazil
}
\email{sergiori@im.ufrj.br}

\keywords{Hausdorff dimension, horseshoes, Lagrange spectrum, Markov spectrum, flows on $3$-manifolds.}

\date{\today}

\begin{document}

\begin{abstract}
Let $g_0$ be a smooth pinched negatively curved Riemannian metric on a complete surface $N$, and let $\Lambda_0$ be a basic hyperbolic set of the geodesic flow of $g_0$ with Hausdorff dimension strictly smaller than two. Given a small smooth perturbation $g$ of $g_0$ and a smooth real-valued function $f$ on the unit tangent bundle to $N$ with respect to $g$, let $L_{g,\Lambda,f}$, resp. $M_{g,\Lambda,f}$ be the Lagrange, resp. Markov spectrum of  asymptotic highest, resp. highest values of $f$ along the geodesics in the hyperbolic continuation $\Lambda$  of $\Lambda_0$.  

We prove that, for generic choices of $g$ and $f$, the Hausdorff dimension of the sets $L_{g,\Lambda, f}\cap (-\infty, t)$ vary continuously with $t\in\mathbb{R}$ and, moreover, $M_{g,\Lambda, f}\cap (-\infty, t)$ has the same Hausdorff dimension of $L_{g,\Lambda, f}\cap (-\infty, t)$ for all $t\in\mathbb{R}$. 
\end{abstract}

\maketitle

\vspace{-3.1pt}
\setcounter{tocdepth}{1}


\section{Introduction}\label{s.introduction}

The first paper of this series \cite{CMM} discussed the continuity properties of the Hausdorff dimension across dynamical Lagrange and Markov spectra of surface diffeomorphisms. In this article, our goal is to extend the results in \cite{CMM} to the case of geodesic flows of negatively curved Riemannian surfaces. 

\subsection{Dynamical Markov and Lagrange spectra}\label{ss.dynamical-Markov-Lagrange}

Let $M$ be a smooth manifold, $T=\mathbb{Z}$ or $\mathbb{R}$, and $\phi=(\phi^t)_{t\in T}$ be a discrete-time ($T=\mathbb{Z}$) or continuous-time ($T=\mathbb{R}$) smooth dynamical system on $M$, that is, $\phi^t:M\to M$ are smooth diffeomorphisms, $\phi^0=\textrm{id}$, and $\phi^t\circ\phi^s=\phi^{t+s}$ for all $t,s\in T$.   

Given a compact invariant subset $\Lambda\subset M$ and a function $f:M\to\mathbb{R}$, we define the \emph{dynamical Markov, resp. Lagrange, spectrum} $M_{\phi, \Lambda, f}$, resp. $L_{\phi, \Lambda, f}$ as 
$$M_{\phi, \Lambda, f}=\{m_{\phi, f}(x): x\in\Lambda\}, \quad \textrm{resp.} \quad L_{\phi, \Lambda, f}=\{\ell_{\phi, f}(x): x\in\Lambda\}$$
where  
$$m_{\phi, f}(x):=\sup\limits_{t\in T} f(\phi^t(x)), \quad \textrm{resp.} \quad \ell_{\phi, f}(x):=\limsup\limits_{t\to+\infty} f(\phi^t(x))$$

\begin{remark}\label{r.L<M} An elementary compactness argument (cf. Remark in Section 3 of \cite{MoRo}) shows that  
$$\{\ell_{\phi, f}(x):x\in A\}\subset\{m_{\phi, f}(x): x\in A\}\subset f(A)$$
whenever $A\subset M$ is a compact $\phi$-invariant subset. 
\end{remark}

\subsection{Statement of the main result}

In this paper, we shall study the fractal geometry of $M_{\phi, \Lambda, f}\cap (-\infty, t)$ and $L_{\phi, \Lambda, f}\cap (-\infty, t)$ as $t\in\mathbb{R}$ varies in the context of geodesic flows on negatively curved Riemannian surfaces.  

More precisely, let $N$ be a complete surface, let $g_0$ be a smooth ($C^r$, $r\geq 4$) pinched negatively curved Riemannian metric on  $N$, \emph{i.e}, the curvature is bounded above and  below by two negative constants. Let $\phi_{g_0}=(\phi_{g_0}^t)_{t\in\mathbb{R}}$ be the geodesic flow on the unit tangent bundle $M=S_{g_0}N$ of $N$ with respect to $g_0$. Consider a horseshoe $\Lambda_0$ of $\phi_{g_0}$ with Hausdorff dimension $\textrm{dim}(\Lambda_0)<2$. Denote by $\mathcal{U}$ a small ($C^r$, $r\geq 4$) neighborhood of $g_0$ such that $\Lambda_0$ admits a hyperbolic continuation $\Lambda$ for all $g\in\mathcal{U}$. 

\begin{theorem}\label{t.A} If $\mathcal{U}$ is sufficiently small, then there exists a Baire residual subset $\mathcal{U}^{*}\subset \mathcal{U}$ with the following property. For every $g\in\mathcal{U}^{*}$, there exists a dense subset $\mathcal{H}_{\phi_g,\Lambda}\subset C^s(S_g N,\mathbb{R})$, $s\geq 4$, such that the function 
$$t\mapsto \textrm{dim}(L_{\phi_g, \Lambda, f}\cap (-\infty, t))$$
is continuous and 
$$\textrm{dim}(L_{\phi_g, \Lambda, f}\cap (-\infty, t)) = \textrm{dim}(M_{\phi_g, \Lambda, f}\cap (-\infty, t)) \quad \forall\,\, t\in\mathbb{R}$$
whenever $f\in \mathcal{H}_{\phi_g,\Lambda}$.
\end{theorem}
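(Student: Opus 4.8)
\emph{Strategy and reduction to a surface diffeomorphism.} The plan is to reduce Theorem~\ref{t.A} to its analogue for a horseshoe of a surface diffeomorphism — precisely the setting of the companion paper \cite{CMM} — by passing to a Poincar\'e cross-section of $\phi_g$, and then to push the genericity statements back to the metric $g$ and the observable $f$. Since $\Lambda_0$ is a horseshoe of $\phi_{g_0}$ and so is not a single closed orbit, I would fix a finite family of $C^{r-1}$ local cross-sections $\Sigma=\Sigma_{g_0}$ transverse to the flow; after shrinking $\mathcal{U}$ and fixing a smooth identification of the bundles $S_gN$ for $g$ near $g_0$, these persist as cross-sections $\Sigma_g$ for $\phi_g$ with a $C^{r-1}$ first-return map $R_g:\Sigma_g\to\Sigma_g$, a $C^{r-1}$ return time $\tau_g$ bounded away from $0$, and $\hat\Lambda_g:=\Lambda\cap\Sigma_g$ a horseshoe of $R_g$ with $\Lambda=\{\phi_g^t(y):y\in\hat\Lambda_g,\ 0\le t\le\tau_g(y)\}$; all of these depend continuously on $g\in\mathcal{U}$. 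Since $\Lambda$ is, up to the return-time reparametrisation, locally $\hat\Lambda_g$ times an interval, $\dim(\hat\Lambda_g)=\dim(\Lambda)-1$, which is $<1$ once $\mathcal{U}$ is small enough that $\dim(\Lambda)<2$; thus the hypothesis $\dim(\Lambda_0)<2$ is exactly what places $\hat\Lambda_g$ in the range covered by \cite{CMM}.

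\emph{Reduction of the spectra.} Given $f\in C^s(S_gN,\mathbb{R})$, I would set $\tilde f_g(y):=\max_{0\le t\le\tau_g(y)}f(\phi_g^t(y))$ for $y\in\Sigma_g$. Writing any point of $\Lambda$ as $\phi_g^s(y)$ with $y\in\hat\Lambda_g$ and partitioning the $\phi_g$-orbit of $y$ into consecutive return-time intervals, one checks directly — as in Section~3 of \cite{MoRo}, together with Remark~\ref{r.L<M} — that
$$m_{\phi_g,f}(\phi_g^s(y))=\sup_{n\in\mathbb{Z}}\tilde f_g(R_g^n(y)),\qquad \ell_{\phi_g,f}(\phi_g^s(y))=\limsup_{n\to+\infty}\tilde f_g(R_g^n(y)).$$
Hence $M_{\phi_g,\Lambda,f}=M_{R_g,\hat\Lambda_g,\tilde f_g}$ and $L_{\phi_g,\Lambda,f}=L_{R_g,\hat\Lambda_g,\tilde f_g}$, and it suffices to prove the two conclusions of the theorem for the pair $(R_g,\tilde f_g)$.

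\emph{Smoothing the induced observable and invoking \cite{CMM}.} The function $\tilde f_g$ is only Lipschitz in general, so \cite{CMM} cannot be applied to it as it stands; making this step work is the crux of the argument. I would argue that, for a residual set of $f$, the loci in $\Sigma_g$ where the maximum defining $\tilde f_g(y)$ is attained at an endpoint of $[0,\tau_g(y)]$, at a degenerate interior critical point, or at two distinct interior points, have codimension at least one; since $\dim(\hat\Lambda_g)<1$, a transversality argument — moving these loci by a further $C^s$-small perturbation of $f$ supported near the flow-orbit segments of points of $\hat\Lambda_g$, which is possible because $\Lambda$ is a Lebesgue-null lamination of $S_gN$ — then shows that for a \emph{dense} set of $f$ they are disjoint from $\hat\Lambda_g$. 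For such $f$, the implicit function theorem yields a $C^{\min(r,s)-1}$ map $y\mapsto t(y)$ with $\tilde f_g(y)=f(\phi_g^{t(y)}(y))$ on a neighbourhood of $\hat\Lambda_g$ in $\Sigma_g$, so $\tilde f_g$ is there of class $C^{\min(r,s)-1}$, hence at least $C^3$. Moreover, on this dense set the assignment $f\mapsto\tilde f_g$ is, near $\hat\Lambda_g$, a submersion: a $C^s$-small perturbation of $f$ supported in a small neighbourhood of the Cantor set $\{\phi_g^{t(y)}(y):y\in\hat\Lambda_g\}$ changes $\tilde f_g$, to first order, by an arbitrary small $C^{\min(r,s)-1}$ function near $\hat\Lambda_g$; so every small perturbation of $\tilde f_g$ near $\hat\Lambda_g$ is realized by a small perturbation of $f$, and $\tilde f_g$ belongs to the class of observables to which \cite{CMM} applies.

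\emph{Genericity and conclusion.} By \cite{CMM}, there is a residual set of surface diffeomorphisms for which $\hat\Lambda_g$ is generic in the sense of that paper, and for each such diffeomorphism a dense set of $C^{\min(r,s)-1}$ observables $h$ for which $t\mapsto\dim(L_{R_g,\hat\Lambda_g,h}\cap(-\infty,t))$ is continuous and coincides with $t\mapsto\dim(M_{R_g,\hat\Lambda_g,h}\cap(-\infty,t))$. Since $g\mapsto R_g$ is continuous and perturbations of the metric realize the genericity conditions of \cite{CMM} for a residual set of metrics (as is standard in the perturbation theory of geodesic flows), I would take $\mathcal{U}^{*}\subset\mathcal{U}$ to be that Baire residual set; then for $g\in\mathcal{U}^{*}$ I would let $\mathcal{H}_{\phi_g,\Lambda}$ be the set of $f\in C^s(S_gN,\mathbb{R})$ such that $\tilde f_g$ is $C^{\min(r,s)-1}$ near $\hat\Lambda_g$ and lies in the dense good set for $R_g$, which is dense in $C^s(S_gN,\mathbb{R})$ by the previous paragraph. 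For $f\in\mathcal{H}_{\phi_g,\Lambda}$, the conclusions for $(R_g,\tilde f_g)$ together with the identifications $L_{\phi_g,\Lambda,f}=L_{R_g,\hat\Lambda_g,\tilde f_g}$ and $M_{\phi_g,\Lambda,f}=M_{R_g,\hat\Lambda_g,\tilde f_g}$ give the theorem. I expect the main obstacle to be exactly the smoothing step: transferring the perturbative machinery of \cite{CMM}, tailored to smooth observables on a surface, through the non-smooth operation $f\mapsto\tilde f_g$ — in particular the transversality argument against the \emph{Cantor set} $\hat\Lambda_g$ (where $\dim(\hat\Lambda_g)<1$ is decisive) that renders $\tilde f_g$ smooth near the horseshoe for a dense set of $f$, and the check that the perturbations of $\tilde f_g$ and of $R_g$ required by \cite{CMM} are realized by perturbations of $f$ and $g$ on $S_gN$; a secondary difficulty is the simultaneous variation with $g$ of the ambient manifold, the flow and the section, which I would handle by fixing a smooth trivialization of $\{S_gN\}$ for $g$ near $g_0$.
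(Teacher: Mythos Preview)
Your reduction via a Poincar\'e section, the identity $L_{\phi_g,\Lambda,f}=L_{R_g,\hat\Lambda_g,\tilde f_g}$ (and likewise for $M$), and the use of $\dim(\hat\Lambda_g)<1$ are exactly the paper's strategy. However, two steps in your proposal are genuinely underspecified and are handled differently in the paper.

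First, you claim that for a dense set of $f$ the induced observable $\tilde f_g$ becomes $C^{\min(r,s)-1}$ on a \emph{full neighbourhood} of $\hat\Lambda_g$, by a transversality argument moving the singular loci off the horseshoe. The paper does \emph{not} attempt this. Instead it defines $\mathcal H_{\phi_g,\Lambda}$ so that $F=\tilde f_g$ is $C^1$ with gradient transverse to $E^s,E^u$ only on sets of the form $V_n\setminus L_n$, where $J$ and the $L_n$ are finite unions of $C^1$ curves that \emph{may} meet $K=\hat\Lambda_g$; the singular locus (ties between competing maxima) can accumulate on $J$, so forcing it entirely off $K$ by perturbing $f$ is more delicate than your one-line ``transversality argument'' suggests (one must simultaneously keep $J$ off $K$ and then control the tie locus $L_{n_0}$ for the resulting $n_0$, checking that the relevant perturbations of $f$ actually move these curves). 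The paper sidesteps this by inverting the roles: for each $t$ and each $\eta>0$ it uses Lemma~18 of \cite{MoRo} to extract a subhorseshoe $K''\subset K$, with nearly full dimension, that \emph{avoids} $J\cup L_{n_0}$; on $K''$ the function $F$ is $C^1$ with transverse gradient, and only then does one invoke Proposition~2.10 of \cite{CMM}. This is the substantive new step beyond \cite{CMM}.

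Second, your sentence ``perturbations of the metric realize the genericity conditions of \cite{CMM} for a residual set of metrics (as is standard)'' hides a real issue: the map $g\mapsto R_g$ lands in \emph{area-preserving} surface diffeomorphisms and is very far from surjective, so one cannot simply pull back residual sets from \cite{CMM}. The paper specifies exactly which conditions are needed --- Moreira--Yoccoz's property~(H$\alpha$) for every subhorseshoe (reduced to non-vanishing Birkhoff invariants at periodic points, a condition on the $3$-jet of $R_g$) and incommensurability of the logarithms of unstable eigenvalues for pairs of periodic orbits --- and then invokes the Klingenberg--Takens jet-perturbation theorems \cite{KT} to show these are $C^r$-Baire generic in $g$ for $r\ge 4$. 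Without naming this mechanism, your claim that $\mathcal U^*$ is residual is unjustified.
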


\section{Proof of the main result} 

Morally speaking, our proof of Theorem \ref{t.A} consists into a reduction to the context of the first paper of this series \cite{CMM}.

\subsection{Dimension reduction via Poincar\'e maps} The notion of \emph{good cross sections} was exploited in \cite{MoRo} to describe the dynamics of $\phi_g$ on $\Lambda$ (for $g\in\mathcal{U}$) in terms of Poincar\'e maps. More precisely, they constructed a finite number of disjoint smooth ($C^r$, $r\geq 3$) cross sections $\Sigma_i$, $1\leq i\leq k$, of the flow $\phi$ such that the $\phi$-orbit of any point of $\Lambda$ intersects $\Theta:=\bigsqcup\limits_{i=1}^k \Sigma_i$, the subset $K:=\Lambda\cap\Theta$ is disjoint from the boundary $\partial\Theta:=\bigsqcup\limits_{i=1}^k\partial\Sigma_i$, and $K$ is a horseshoe of the Poincar\'e (first return) map $\mathcal{R}:D_{\mathcal{R}}\to \Theta$ from a neighborhood $D_{\mathcal{R}}\subset\Theta$ of $K$ to $\Theta$ sending $y\in D_{\mathcal{R}}$ to the point  $\mathcal{R}(y) = \phi^{t_+(y)}(y)$ where the forward $\phi$-orbit of $y$ first hits $\Theta$. 

The relation between the Hausdorff dimensions of $K$ and $\Lambda$ is described by the following lemma (compare with Lemma 14 in \cite{MoRo}): 

\begin{lemma}\label{l.dimK} In the previous setting, one has $\textrm{dim}(\Lambda)=\textrm{dim}(K)+1$. 
\end{lemma}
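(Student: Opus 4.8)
The plan is to show, as is standard for hyperbolic sets of flows, that $\Lambda$ is \emph{locally} a product of $K$ with a flow interval, and then to combine this with the behaviour of Hausdorff dimension under taking products with an interval. Three elementary facts will be used: (i) for any set $A$ in a metric space and any nondegenerate interval $I\subset\reals$ one has $\dim(A\times I)=\dim(A)+1$, the lower bound being the general product inequality $\dim(A\times I)\ge\dim(A)+\dim(I)$ and the upper bound coming from $\dim(A\times I)\le\dim(A)+\overline{\dim}_B(I)$ together with $\overline{\dim}_B(I)=1$; (ii) a $C^r$ diffeomorphism ($r\ge1$) is locally bi-Lipschitz, hence preserves Hausdorff dimension; (iii) Hausdorff dimension is finitely stable. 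Beyond these, the only input is the flow-box geometry near the cross sections $\Sigma_i$.

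\textbf{Local product structure.} Fix $1\le i\le k$ and $y\in K_i:=\Lambda\cap\Sigma_i$. Since $\Sigma_i$ is a $C^r$ cross section of $\phi_g$, for a small enough neighbourhood $V_y$ of $y$ in $\Sigma_i$ and a small enough $\epsilon_y>0$ the map $\Psi_y(z,t)=\phi_g^{t}(z)$ is a $C^r$ diffeomorphism of $V_y\times(-\epsilon_y,\epsilon_y)$ onto an open flow box $B_y\subset M$. Because $\Lambda$ is $\phi_g$-invariant and $V_y\subset\Sigma_i$, a point $\phi_g^{t}(z)\in B_y$ lies in $\Lambda$ if and only if $z=\phi_g^{-t}(\phi_g^{t}(z))\in\Lambda\cap\Sigma_i=K_i$; thus $\Lambda\cap B_y=\Psi_y\big((K_i\cap V_y)\times(-\epsilon_y,\epsilon_y)\big)$, and since $\Psi_y$ is bi-Lipschitz on relatively compact subboxes we get $\dim(\Lambda\cap B_y)=\dim(K_i\cap V_y)+1$.

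\textbf{Upper bound.} The first-return time $t_+$ is continuous and positive on the compact set $K$, hence $\delta\le t_+\le\tau$ on $K$ for some $0<\delta\le\tau$; since $K$ is $\mathcal{R}$-invariant, the set of times at which the $\phi_g$-orbit of a point $w\in\Lambda$ meets $\Theta$ is bi-infinite with gaps in $[\delta,\tau]$, so one may write $w=\phi_g^{s}(z)$ with $z\in K$ and $s\in[0,\tau]$. Cover $K$ by finitely many flow boxes $B_{y_1},\dots,B_{y_N}$ as above, put $\epsilon:=\min_l\epsilon_{y_l}>0$, and cover $[0,\tau]$ by finitely many intervals $[s_p,s_p+\epsilon]$. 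Then $\Lambda$ is contained in the finite union of the sets $\phi_g^{s_p}\big(\Psi_{y_l}((K\cap V_{y_l})\times[0,\epsilon])\big)$, each of Hausdorff dimension $\dim(K\cap V_{y_l})+1\le\dim(K)+1$ (as $\phi_g^{s_p}$ is a diffeomorphism); by finite stability $\dim(\Lambda)\le\dim(K)+1$.

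\textbf{Lower bound and conclusion.} Write $K$ as the finite union of the pieces $K_i\cap V_{y_l}$ from above; by finite stability some piece satisfies $\dim(K_i\cap V_{y_l})=\dim(K)$. For that piece, $\Lambda\supset\Lambda\cap B_{y_l}=\Psi_{y_l}\big((K_i\cap V_{y_l})\times(-\epsilon_{y_l},\epsilon_{y_l})\big)$, whose dimension is $\dim(K_i\cap V_{y_l})+1=\dim(K)+1$, so $\dim(\Lambda)\ge\dim(K)+1$. The two inequalities give $\dim(\Lambda)=\dim(K)+1$.

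I expect the only genuinely delicate points to be bookkeeping ones: establishing the local decomposition $\Lambda\cap B_y=\Psi_y\big((K_i\cap V_y)\times I\big)$ together with uniformity of the flow-box size over the compact sets involved, and invoking the product-dimension formula in the required generality. Since $K$ carries no self-similarity a priori, the upper bound in (i) must be taken via the \emph{upper box} dimension of the interval factor rather than via any dynamical scaling structure; everything else is routine.
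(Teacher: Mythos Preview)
Your proof is correct and follows essentially the same approach as the paper: both arguments establish that $\Lambda$ is locally the bi-Lipschitz image of $(K\cap V)\times(\text{interval})$ via flow boxes, invoke the product formula $\dim(A\times I)=\dim(A)+1$, and conclude by finite stability of Hausdorff dimension. The only cosmetic difference is that the paper covers $\Lambda$ directly by finitely many tubular neighbourhoods $U_l\cong V_l\times(-\gamma_l,\gamma_l)$ (so no time-translation step is needed), whereas you cover $K$ by short flow boxes and then translate by $\phi_g^{s_p}$ to sweep out $\Lambda$; both routes lead to the same two inequalities.
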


\begin{proof} We cover $\Lambda$ with a finite number of tubular neighborhoods $U_l$, $1\leq l\leq m$ of compact pieces of $\phi$-orbits issued from points in $\Theta$, say $U_l=\{\phi^t(y): |t|<\gamma_l, y\in V_l\}$ where $V_l\subset \Theta-\partial\Theta$ is open and $\gamma_l\in\mathbb{R}$. 

Since $\textrm{dim}(\Lambda)=\max\limits_{1\leq l\leq m}\{\textrm{dim}(\Lambda\cap U_l)\}$ and $\textrm{dim}(K) = \max\limits_{1\leq l\leq m}\{\textrm{dim}(K\cap V_l)\}$, we can select $l_0$ and $l_1$ such that $\textrm{dim}(\Lambda)=\textrm{dim}(\Lambda\cap U_{l_0})$ and $\textrm{dim}(K)=\textrm{dim}(K\cap V_{l_1})$. Because $\Lambda\cap V_l = K\cap V_l$ and $U_l$ is a tubular neighborhood for each $1\leq l\leq m$, we also have that $\Lambda\cap U_l$ is diffeomorphic to $(K\cap V_l)\times (-\gamma_l,\gamma_l)$. 

It follows that 
$$\textrm{dim}(\Lambda) = \textrm{dim}(\Lambda\cap U_{l_0}) = \textrm{dim}(K\cap V_{l_0})+1\leq \textrm{dim}(K)+1$$ 
and 
$$\textrm{dim}(K)+1 = \textrm{dim}(K\cap V_{l_1}) +1 = \textrm{dim}(\Lambda\cap U_{l_1}) \leq \textrm{dim}(\Lambda)$$

This proves the lemma. 
\end{proof}

The dynamical Lagrange and Markov spectra of $\Lambda$ and $K$ are related in the following way. Given a function $f\in C^s(S_gN,\mathbb{R})$, $s\geq 1$, let us denote by $F=\max_{\phi} f: D_{\mathcal{R}}\to\mathbb{R}$ the function 
$$F(y):=\max\limits_{0\leq t\leq t_+(y)} f(\varphi^t(y))$$

\begin{remark} $F=\max_{\phi} f$ might not be $C^1$ in general. 
\end{remark} 

By definition: 
$$\limsup\limits_{n\to+\infty}F(\mathcal{R}^n(x)) = \limsup_{t\to+\infty} f(\phi_g^t(x))$$ 
and 
$$\sup\limits_{n\in\mathbb{Z}} F(\mathcal{R}^n(x)) = \sup\limits_{t\in\mathbb{R}} f(\phi_g^t(x))$$ 
for all $x\in K$. In particular, 
$$L_{\phi_g,\Lambda,f} = L_{\mathcal{R}, K, F} \quad \textrm{ and } \quad M_{\phi_g, \Lambda, f} = M_{\mathcal{R}, K, F}$$

This reduces Theorem \ref{t.A} to the following statement: 

\begin{theorem}\label{t.B} In the setting of Theorem \ref{t.A}, if $\mathcal{U}$ is sufficiently small, then there exists a Baire residual subset $\mathcal{U}^*\subset\mathcal{U}$ such that, for each $g\in\mathcal{U}^*$, one can find a dense subset $\mathcal{H}_{\phi_g,\Lambda}\subset C^s(S_gN,\mathbb{R})$, $s\geq 4$, so that the function 
$$t\mapsto \textrm{dim}(L_{\mathcal{R}, K, \max_{\phi_g} f}\cap (-\infty, t))$$
is continuous and 
$$\textrm{dim}(L_{\mathcal{R}, K, \max_{\phi_g} f}\cap (-\infty, t)) = \textrm{dim}(M_{\mathcal{R}, K, \max_{\phi_g} f}\cap (-\infty, t)) \quad \forall\,\, t\in\mathbb{R}$$
whenever $f\in \mathcal{H}_{\phi_g,\Lambda}$. 
\end{theorem}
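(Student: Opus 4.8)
The plan is to deduce Theorem~\ref{t.B} from the corresponding statement for horseshoes of surface diffeomorphisms proved in \cite{CMM}. The only genuine difficulty is that the return map $\mathcal{R}$ is a map on the disjoint union $\Theta = \bigsqcup \Sigma_i$ of two-dimensional cross sections (so $K$ is a horseshoe of a surface diffeomorphism, and the results of \cite{CMM} apply to it directly), but the natural observable $F = \max_{\phi_g} f$ is in general only continuous, not $C^1$, because of the $t_+(y)$ cutoff and the nonsmooth $\max$. So the reduction has two parts: (i) engineer a dense set of metrics $g$ and observables $f$ for which $F$ is as regular as \cite{CMM} requires and enjoys the genericity properties used there (local extrema nondegenerate, the function "separating" the dynamics in the sense of the arguments of \cite{CMM}, etc.); and (ii) transport the conclusion of \cite{CMM} back through the identifications $L_{\phi_g,\Lambda,f}=L_{\mathcal{R},K,F}$ and $M_{\phi_g,\Lambda,f}=M_{\mathcal{R},K,F}$ established above.

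Concretely, I would proceed as follows. First, fix the cross sections $\Sigma_i$ and the horseshoe $K$ of $\mathcal{R}$ as in \cite{MoRo}; note that $K$ is disjoint from $\partial\Theta$ and that the first-return time $t_+$ is smooth on a neighborhood $D_{\mathcal{R}}$ of $K$, since the $\Sigma_i$ are transverse to the flow and $K$ avoids the boundary. Second, analyze the structure of $F(y)=\max_{0\le t\le t_+(y)}f(\phi_g^t(y))$: along each local orbit segment the maximum of $f\circ\phi_g^{\,\cdot}$ is attained either at an interior critical point of $t\mapsto f(\phi_g^t(y))$ or at an endpoint; for a residual/dense choice of $(g,f)$ one arranges that, on the relevant compact pieces of orbit through $K$, these interior maxima are nondegenerate and the endpoint values never tie with them, so that $F$ is piecewise smooth with the "finitely many smooth branches, transverse switching" structure that the methods of \cite{CMM} can accommodate — this is exactly the role played by the analogous hypothesis in the first paper. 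Third, having placed ourselves in the hypotheses of \cite{CMM} for the pair $(\mathcal{R}, K, F)$ on the surface $\Theta$, invoke its main theorem to get a residual set of perturbations and a dense set of observables for which $t\mapsto \dim(L_{\mathcal{R},K,F}\cap(-\infty,t))$ is continuous and agrees with $t\mapsto\dim(M_{\mathcal{R},K,F}\cap(-\infty,t))$.

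The bookkeeping step is to make the quantifiers match: \cite{CMM} perturbs the surface diffeomorphism and the observable independently, whereas here the perturbation parameter is the metric $g\in\mathcal{U}$, which simultaneously moves the flow $\phi_g$, the cross sections, the return map $\mathcal{R}=\mathcal{R}_g$, and the horseshoe $K=K_g$ (through hyperbolic continuation), and only then do we choose $f$. I would handle this by first shrinking $\mathcal{U}$ so that all the good-cross-section data of \cite{MoRo} persist and depend smoothly on $g$, then defining $\mathcal{U}^*$ as the intersection of $\mathcal{U}$ with the pullback, under $g\mapsto(\mathcal{R}_g,K_g)$, of the residual set furnished by \cite{CMM} — openness/denseness of the latter, together with continuity of $g\mapsto\mathcal{R}_g$ in the appropriate topology, gives residuality of $\mathcal{U}^*$ in $\mathcal{U}$. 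For fixed $g\in\mathcal{U}^*$, the set $\mathcal{H}_{\phi_g,\Lambda}$ is then $\{f\in C^s(S_gN,\mathbb{R}) : F=\max_{\phi_g}f \text{ lies in the dense set of observables from \cite{CMM} for }(\mathcal{R}_g,K_g)\}$; density of $\mathcal{H}_{\phi_g,\Lambda}$ follows because $f\mapsto \max_{\phi_g}f$ is continuous and, by the perturbation analysis of step two, its image meets every $C^1$-neighborhood of admissible observables — one perturbs $f$ near the orbit pieces through $K$ to realize a prescribed $C^1$-small change in $F$.

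I expect the main obstacle to be step two: showing that, generically in $(g,f)$, the function $F=\max_{\phi_g}f$ genuinely fits the framework of \cite{CMM} — i.e. that the nonsmoothness of $F$ is "tame" (finitely many smooth sheets meeting along submanifolds transverse to the stable/unstable laminations of $K$) and that the genericity conditions on observables required in \cite{CMM} (nondegeneracy of extrema on $K$, the relevant transversality between level sets of $F$ and the Cantor set $K$) can be achieved by perturbing $f$ alone, without destroying the identities $L_{\phi_g,\Lambda,f}=L_{\mathcal{R},K,F}$, $M_{\phi_g,\Lambda,f}=M_{\mathcal{R},K,F}$. Everything else — Lemma~\ref{l.dimK} for the dimension count, the reduction $\ref{t.A}\Rightarrow\ref{t.B}$ already carried out, and the transfer of the final dimension statement — is essentially formal once this regularity-and-genericity input is in place.
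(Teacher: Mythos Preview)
Your overall strategy---reduce to \cite{CMM} by controlling the regularity of $F=\max_{\phi_g}f$---is exactly the paper's, and you correctly flag the regularity of $F$ as the main issue. But there is a genuine gap in how you build $\mathcal{U}^*$, and your treatment of the regularity of $F$ underestimates what is actually needed.

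\textbf{The pullback argument for $\mathcal{U}^*$ does not work.} You propose to take $\mathcal{U}^*$ to be the preimage, under $g\mapsto(\mathcal{R}_g,K_g)$, of the residual set of surface diffeomorphisms supplied by \cite{CMM}, and claim that continuity of this map yields residuality. Continuity alone does not: the preimage of an open dense set under a continuous map is open but need not be dense (think of a constant map). What you would need is some openness or submersion property of $g\mapsto\mathcal{R}_g$, but the Poincar\'e maps of geodesic flows form an extremely thin subset of the space of surface diffeomorphisms, so there is no hope for that. The paper avoids this entirely: it identifies \emph{exactly which} dynamical properties of $(\mathcal{R}_g,K_g)$ are needed for the \cite{CMM}/\cite{Mo} machinery---namely Moreira--Yoccoz property (H$\alpha$) for every subhorseshoe, plus incommensurability of log-unstable eigenvalues for some pair of periodic orbits---and then invokes the Klingenberg--Takens perturbation theorems to show that these conditions are $C^r$-generic in the metric $g$. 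This is the substantive step you are missing.

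\textbf{The regularity of $F$ is handled differently.} You envision a generic $(g,f)$ for which $F$ is piecewise smooth in a way that simply places it inside the admissible observables of \cite{CMM}. The paper does not achieve (and does not claim) that $F$ is globally $C^1$ or globally in the \cite{CMM} class. Instead it shows, via a cascade of Sard-type arguments (Definitions~\ref{d.N},~\ref{d.M} and Lemmas~\ref{l.N},~\ref{l.M},~\ref{l.H}), that for a dense set of $f$ there is a finite collection of $C^1$-curves $J\subset\Theta$ such that, outside any $1/n$-neighborhood of $J$, the function $F$ is $C^1$ away from a further finite set of curves $L_n$, and moreover its gradient is transverse to $E^s$ and $E^u$ along $K$ there. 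The lower-semicontinuity argument (Proposition~\ref{p.210'}) then \emph{adapts} the \cite{CMM} proof rather than citing it: one first uses the argument of \cite[Lemma~18]{MoRo} to extract a large subhorseshoe disjoint from $J$ and from the $L_n$, on which $F$ is honestly $C^1$ with transverse gradient, and only then runs the \cite{CMM} combinatorics and Moreira's dimension formula. So the nonsmoothness is not ``tame enough to fit the framework''; rather, it is excised at controlled dimension cost before invoking \cite{CMM}.
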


The proof of Theorem \ref{t.B} starts as follows. Let $\{R_a\}_{a\in\mathcal{A}}$ be a Markov partition consisting of rectangles $R_a\simeq I_a^s\times I_a^u$ delimited by compact pieces $I_a^s$, resp. $I_a^u$, of stable, resp. unstable, manifolds of a finite collection of $\mathcal{R}$-periodic points of $K\subset\Theta$.  

Recall that the stable and unstable manifolds of $K$ can be extended to locally $\mathcal{R}$-invariant $C^{1+\varepsilon}$-foliations in $D_{\mathcal{R}}$ for some $\varepsilon>0$. These foliations induce projections $\pi_a^u:R_a\to I_a^s\times\{i_a^u\}$ and $\pi_a^s: R_a\to \{i_a^s\}\times I_a^u$ of the rectangles into the connected components $I_a^s\times\{i_a^u\}$ and $\{i_a^s\}\times I_a^u$ of the stable and unstable boundaries of $R_a$ where $i_a^u\in\partial I_a^u$ and $i_a^s\in\partial I_a^s$ are fixed arbitrarily. In this way, we obtain stable and unstable Cantor sets 
$$K^s=\bigcup\limits_{a\in\mathcal{A}}\pi_a^u(K\cap R_a) \quad \textrm{ and } \quad K^u=\bigcup\limits_{a\in\mathcal{A}}\pi_a^s(K\cap R_a)$$
associated to $K$. 

In the sequel, we will analyze the sets 
$$K_t:=\{y\in K: m_{\mathcal{R},K,\max_{\phi} f}(y)\leq t\},$$
$$K_t^s:=\bigcup\limits_{a\in\mathcal{A}}\pi_a^u(K_t\cap R_a) \quad \textrm{ and } \quad K_t^u:=\bigcup\limits_{a\in\mathcal{A}}\pi_a^s(K_t\cap R_a)$$

\subsection{Upper-semicontinuity}

Denote by $D_s(t)$ and $D_u(t)$ the limit capacities of $K_t^s$ and $K_t^u$. As it was shown in \cite[Proposition 2.6]{CMM}, an elementary compactness argument reveals that:

\begin{proposition}\label{p.Du-upper-sc} For any $g\in\mathcal{U}$ and $f\in C^0(S_gN,\mathbb{R})$, the functions $t\mapsto D_u(t)$ and $t\mapsto D_s(t)$ are upper semicontinuous. 
\end{proposition}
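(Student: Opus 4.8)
The plan is to show that if $t_n \to t$ and $D_u(t_n) \to \delta$, then $\delta \le D_u(t)$; the argument for $D_s$ is identical after swapping the roles of stable and unstable. Since $K_t^u$ is a decreasing family in $t$ (if $t' \le t$ then $K_{t'} \subset K_t$, hence $K_{t'}^u \subset K_t^u$), it suffices to treat the case $t_n \downarrow t$; for sequences approaching from above or in a mixed fashion the monotonicity $D_u(t') \le D_u(t)$ for $t' \le t$ already gives the desired inequality on the limsup. So assume $t_n \downarrow t$, and let $\delta = \lim_n D_u(t_n) = \inf_n D_u(t_n)$ (the limit exists by monotonicity of $t \mapsto D_u(t)$).

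The key geometric input is that $\bigcap_n K_{t_n}^u = K_t^u$. Indeed $K_t^u \subset \bigcap_n K_{t_n}^u$ is clear; conversely, if a point lies in every $K_{t_n}^u$, then (using that $\mathcal{A}$ is finite and the projections $\pi_a^s$ are continuous with closed Cantor-set images) one can pull back along a subsequence to a point $y \in K$ with $m_{\mathcal{R},K,\max_\phi f}(y) \le t_n$ for all $n$, hence $m_{\mathcal{R},K,\max_\phi f}(y) \le t$, i.e. $y \in K_t$, so the projected point lies in $K_t^u$. The relevant compactness facts are: $K$ is compact, each $K_t$ is compact (here one uses that $m_{\mathcal{R},K,\max_\phi f}$ is upper semicontinuous on $K$, which in turn rests on the continuity of $f$ and of the return-time/first-return structure away from $\partial\Theta$ — exactly the ``elementary compactness argument'' invoked in Remark \ref{r.L<M}), and each $K_t^u$ is compact as a finite union of continuous images of compact sets.

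Once we know $K_t^u = \bigcap_n K_{t_n}^u$ with the $K_{t_n}^u$ a nested decreasing sequence of compact sets, upper semicontinuity of the limit capacity follows from a standard covering argument: given $\varepsilon > 0$, pick a finite cover of $K_t^u$ by open intervals $J_1,\dots,J_N$ whose $\delta_0$-scale counting exponent is within $\varepsilon$ of $D_u(t)$; since $K_{t_n}^u \setminus \bigcup_i J_i$ is a decreasing sequence of compacta with empty intersection, it is eventually empty, so for large $n$ the same cover works for $K_{t_n}^u$, giving $D_u(t_n) \le D_u(t) + \varepsilon$ at that scale. Letting the scale go to zero and $\varepsilon \to 0$ yields $\delta = \lim_n D_u(t_n) \le D_u(t)$. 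Alternatively, since this is asserted to follow verbatim from \cite[Proposition 2.6]{CMM}, one may simply quote that result: the only thing needing checking is that the present setting (Poincaré section $K$, projected Cantor sets $K_t^u$, $K_t^s$ defined via the $C^{1+\varepsilon}$ stable/unstable foliations) is an instance of the abstract horseshoe setting treated there, which it is by construction of the Markov partition $\{R_a\}_{a\in\mathcal{A}}$.

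The main obstacle, such as it is, is the upper semicontinuity of $y \mapsto m_{\mathcal{R},K,\max_\phi f}(y)$ and the consequent closedness of $K_t$ and $K_t^u$: because $F = \max_\phi f$ need not be $C^1$ (as flagged in the Remark following its definition), one must argue this directly from compactness of orbit segments and continuity of $f$ rather than from any smoothness of $F$. Everything after that is soft point-set topology plus the routine covering estimate, so I would keep that part brief and lean on \cite{CMM}.
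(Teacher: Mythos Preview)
Your fallback---simply invoking \cite[Proposition~2.6]{CMM}---is exactly what the paper does; the text surrounding Proposition~\ref{p.Du-upper-sc} gives no argument beyond that citation, so on that score you match the paper precisely.

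The explicit covering argument you sketch, however, has a genuine gap. Upper box dimension is \emph{not} upper semicontinuous along nested decreasing sequences of compact sets in general: if $C$ is the middle-thirds Cantor set and $A_n = C\cap[0,3^{-n}]$, then each $A_n$ has box dimension $\log 2/\log 3$ while $\bigcap_n A_n=\{0\}$ has dimension $0$. Your argument only shows that for each \emph{fixed} scale $r$ one eventually has $N(K_{t_n}^u,r)=N(K_t^u,r)$; since the limit capacity is a $\limsup$ over scales and the threshold $n$ beyond which the cover transfers depends on $r$, controlling one scale does not bound $D_u(t_n)$. The phrase ``giving $D_u(t_n)\le D_u(t)+\varepsilon$ at that scale'' conflates a covering number at a single scale with the box dimension itself.

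What makes the argument work in this setting is the $\mathcal{R}$-invariance of $K_t$. Combined with bounded distortion, invariance yields approximate submultiplicativity of the cylinder counts $N_u(t,r)$ (in the notation of the proof of Proposition~\ref{p.210'}), so by Fekete's lemma $D_u(t)=\lim_r \frac{\log N_u(t,r)}{r}=\inf_r \frac{\log N_u(t,r)}{r}$. Once $D_u(t)$ is an infimum, a \emph{single} scale $r_*$ with $\frac{\log N_u(t,r_*)}{r_*}<D_u(t)+\varepsilon$ suffices: your compactness step gives $N_u(t_n,r_*)=N_u(t,r_*)$ for large $n$, whence $D_u(t_n)\le\frac{\log N_u(t_n,r_*)}{r_*}<D_u(t)+\varepsilon$. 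This is the missing ingredient you would need to supply if you chose not to lean on \cite{CMM}.

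Two minor slips: you call $t\mapsto K_t^u$ a \emph{decreasing} family, but your own parenthetical shows it is increasing; and you say $m_{\mathcal{R},K,\max_\phi f}$ is \emph{upper} semicontinuous, but as a supremum of continuous functions it is \emph{lower} semicontinuous---which is exactly the direction needed to make the sublevel sets $K_t$ closed.
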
 

Therefore, it remains study the lower semi-continuity of $D_s(t)$ and $D_u(t)$ and their relations with  $L_{\mathcal{R}, K, \max_{\phi_g} f}\cap (-\infty, t)$ and $M_{\mathcal{R}, K, \max_{\phi_g} f}\cap (-\infty, t)$. For this sake, we introduce the Baire generic sets $\mathcal{U}^*$ and $\mathcal{H}_{\phi_g,\Lambda}$ in the statement of Theorem \ref{t.B}. 

\subsection{Description of $\mathcal{U}^*$}

We say that $g\in\mathcal{U}^*$ whenever every subhorseshoe $\widetilde{K}\subset K_g$ satisfies the so-called \emph{property (H$\alpha$)} of Moreira-Yoccoz \cite{MY} and possesses a pair of periodic points whose logarithms of unstable eigenvalues are incommensurable, where  $K_g$ denoted the hyperbolic continuation of $K$.

The set $\mathcal{U}^*$ was defined so that Moreira's dimension formula \cite[Corollary 3]{Mo} implies the following result:

\begin{proposition}\label{p.dimension-formula} Suppose that $g\in\mathcal{U}^*$. Then, given any  subhorseshoe $\widetilde{K}\subset K$ and any $C^1$ function $H:D_{\mathcal{R}}\to\mathbb{R}$ whose gradient is transverse to the stable and unstable directions of $\mathcal{R}$ at some point of $\widetilde{K}$, one has 
$$\textrm{dim}(H(\widetilde{K})) = \min\{\textrm{dim}(\widetilde{K}),1\}$$
\end{proposition}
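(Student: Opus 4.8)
\textbf{Proof plan for Proposition \ref{p.dimension-formula}.}

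The plan is to deduce the statement from Moreira's dimension formula \cite[Corollary 3]{Mo}, so the bulk of the work is checking that the hypotheses of that formula are met by any subhorseshoe $\widetilde K\subset K$ once $g\in\mathcal U^*$, and then translating the conclusion into the asserted equality. First I would recall the statement of \cite[Corollary 3]{Mo}: if $\widetilde K$ is a horseshoe of a $C^2$ (or $C^{1+\varepsilon}$) surface dynamics satisfying property (H$\alpha$), and $H$ is a $C^1$ real-valued function whose gradient is nowhere tangent to the stable or unstable foliation at some point of $\widetilde K$, then $\dim(H(\widetilde K))=\min\{\dim(\widetilde K),1\}$. By the very definition of $\mathcal U^*$ in the preceding subsection, every subhorseshoe $\widetilde K\subset K_g$ satisfies property (H$\alpha$); and the Poincar\'e return map $\mathcal R$ together with its stable/unstable foliations is $C^{1+\varepsilon}$ by the regularity recalled above (these foliations extend the stable and unstable manifolds of $K$ to $C^{1+\varepsilon}$ locally $\mathcal R$-invariant foliations on $D_{\mathcal R}$). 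So the structural hypotheses transfer directly, and the transversality hypothesis on $H$ is exactly what we have assumed: the gradient of $H$ is transverse to the stable and unstable directions of $\mathcal R$ at some point of $\widetilde K$.

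The main subtlety — and the place where the incommensurability condition in the definition of $\mathcal U^*$ enters — is that Moreira's formula in the form $\dim=\min\{\dim(\widetilde K),1\}$ (rather than the a priori weaker bound $\dim(H(\widetilde K))\le\min\{\dim(\widetilde K),1\}$ together with $\dim(H(\widetilde K))\ge \min\{\dim(\widetilde K),1\}$ holding only for the limit capacity) requires a genuine lower bound, and this is where one needs that $\widetilde K$ is not "too degenerate." Concretely, when $\dim(\widetilde K)\le 1$ the inequality $\dim(H(\widetilde K))\le\dim(\widetilde K)$ is automatic since $H$ is Lipschitz on the compact set $\widetilde K$; the content is the reverse inequality $\dim(H(\widetilde K))\ge\dim(\widetilde K)$, which is false for generic projections of self-similar-type sets unless one rules out resonances. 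Property (H$\alpha$) supplies the recurrence/renormalization structure that makes the projection essentially dimension-preserving, and the existence of a pair of periodic points with incommensurable logarithms of unstable eigenvalues rules out the arithmetic obstructions (lattice-type resonances in the return times / eigenvalue ratios) that would otherwise force a dimension drop. When $\dim(\widetilde K)\ge 1$ one instead shows $H(\widetilde K)$ contains an interval, again via \cite{Mo}, using transversality of $\nabla H$ to both foliations to guarantee that the image is not contained in a lower-dimensional set and that the sum of the stable and unstable Cantor set dimensions being $\ge 1$ forces $H(\widetilde K)$ to have nonempty interior.

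So the steps, in order, are: (1) quote \cite[Corollary 3]{Mo} precisely; (2) verify its hypotheses for $\widetilde K$ and $H$: property (H$\alpha$) and the incommensurable-eigenvalue condition from the definition of $\mathcal U^*$, the $C^{1+\varepsilon}$ regularity of $\mathcal R$ and of its invariant foliations, and the transversality assumption on $\nabla H$; (3) apply the formula to obtain $\dim(H(\widetilde K))=\min\{\dim(\widetilde K),1\}$. I expect the only real obstacle to be a bookkeeping one: ensuring that the hypotheses of \cite{Mo} are stated in a regularity class and generality that literally cover the return map $\mathcal R$ and the (merely $C^{1+\varepsilon}$, not $C^2$) invariant foliations arising here, and that "subhorseshoe" in our sense matches the notion of horseshoe for which (H$\alpha$) is formulated in \cite{MY}; once these identifications are made the proof is immediate. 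There is no new analysis to perform — this proposition is purely an application of the cited dimension formula to the objects set up in the previous subsections.
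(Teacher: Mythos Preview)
Your proposal is correct and is essentially the paper's approach: the paper does not give a separate proof of this proposition at all, but simply prefaces it with the sentence ``The set $\mathcal{U}^*$ was defined so that Moreira's dimension formula \cite[Corollary 3]{Mo} implies the following result,'' i.e., it treats the proposition as an immediate consequence of \cite[Corollary 3]{Mo} once the defining conditions of $\mathcal{U}^*$ (property (H$\alpha$) for every subhorseshoe and the incommensurability of two unstable log-eigenvalues) are in place. Your steps (1)--(3) are exactly this, only spelled out in more detail; note also that by Lemma~\ref{l.dimK} one has $\dim(K)=\dim(\Lambda)-1<1$, so the case $\dim(\widetilde K)\ge 1$ you discuss never actually occurs here.
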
 

For later use, we observe that $\mathcal{U}^*$ is a topologically large subset of $\mathcal{U}$: 

\begin{lemma}\label{l.U*generic} $\mathcal{U}^*$ is a Baire generic subset of $\mathcal{U}$. 
\end{lemma}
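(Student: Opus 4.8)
\textbf{Proof proposal for Lemma \ref{l.U*generic}.}

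The plan is to exhibit $\mathcal{U}^*$ as a countable intersection of open dense subsets of $\mathcal{U}$, one family coming from property (H$\alpha$) and one from the incommensurability condition. First I would set up a countable exhaustion of the set of subhorseshoes: since $K$ is locally maximal, every subhorseshoe $\widetilde{K}\subset K_g$ is the maximal invariant set of $\mathcal{R}_g$ in a finite union of rectangles of a Markov partition (possibly after refining to an iterate $\mathcal{R}_g^n$), so there are only countably many combinatorial types to consider. For each fixed finite union $Z$ of Markov rectangles and each $n$, let $\widetilde{K}_g(Z,n)$ denote the corresponding continuation as $g$ varies (defined on the open set of $g$ for which the combinatorics persist, which is all of $\mathcal{U}$ if $\mathcal{U}$ is small enough), and observe that $\widetilde{K}_g(Z,n)$ depends continuously on $g$ in the $C^1$ sense, hence its stable and unstable Cantor sets and their defining expanding maps vary continuously.

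Next I would handle the two conditions separately. For the incommensurability of logarithms of unstable eigenvalues at a pair of periodic points: given two distinct periodic orbits of $\widetilde{K}_g(Z,n)$ with unstable multipliers $\lambda_1(g),\lambda_2(g)$, the set of $g$ for which $\log|\lambda_1(g)|/\log|\lambda_2(g)|$ is irrational is the complement of a countable union of real-analytic (in finite-dimensional perturbation coordinates) conditions $\log|\lambda_1(g)| = (p/q)\log|\lambda_2(g)|$, each of which is nowhere dense since the eigenvalues can be perturbed independently by a localized perturbation of $g$ supported near one of the orbits. Taking the union over finitely many periodic orbits of each $\widetilde{K}_g(Z,n)$ and then the countable intersection over all $(Z,n)$ produces a residual set on which every subhorseshoe has such a pair. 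For property (H$\alpha$): I would invoke the main genericity result of Moreira--Yoccoz \cite{MY}, which asserts precisely that (H$\alpha$) holds for an open and dense (in the relevant $C^r$ sense) set of dynamics; applied to the expanding maps defining the stable and unstable Cantor sets of each $\widetilde{K}_g(Z,n)$, and transported back to $g$ via the continuous dependence above, this gives, for each $(Z,n)$, an open dense subset of $\mathcal{U}$ on which the corresponding subhorseshoe satisfies (H$\alpha$). Intersecting over all countably many $(Z,n)$ keeps residuality.

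Finally I would combine: $\mathcal{U}^*$ is the intersection of the two residual sets just constructed, hence residual, i.e. Baire generic, in $\mathcal{U}$. The main obstacle I anticipate is the bookkeeping needed to make ``every subhorseshoe'' into a genuinely countable condition --- one must check that refining to iterates $\mathcal{R}_g^n$ and to finite sub-collections of rectangles really does capture all subhorseshoes up to the properties in question, and that (H$\alpha$) and incommensurability for a subhorseshoe are inherited appropriately when passing between $\mathcal{R}_g$ and $\mathcal{R}_g^n$. A secondary technical point is ensuring the perturbations of $g$ used to adjust individual periodic eigenvalues stay within $\mathcal{U}$ and do not destroy the hyperbolic continuation $\Lambda$, which is guaranteed by taking $\mathcal{U}$ small and the perturbations localized.
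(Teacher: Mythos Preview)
Your overall strategy --- exhibit $\mathcal{U}^*$ as a countable intersection of open dense sets --- is correct, but there is a genuine gap in both halves of the argument: you never bridge the distance between perturbations of the \emph{metric} $g$ and perturbations of the \emph{Poincar\'e map} $\mathcal{R}_g$. The Moreira--Yoccoz genericity statement you invoke for (H$\alpha$) lives in the space of smooth (conservative) surface diffeomorphisms, and your eigenvalue-perturbation step presupposes that a localized perturbation of $g$ can move $\log|\lambda_i(g)|$ freely. Neither is automatic for geodesic flows: the assignment $g\mapsto\mathcal{R}_g$ is highly constrained, and an open-dense condition on $\mathcal{R}_g$ does not \emph{a priori} pull back to an open-dense condition on $g$. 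The paper closes exactly this gap by invoking the Klingenberg--Takens theorem \cite{KT}, which asserts that for geodesic flows one can realize generic jet conditions on the Poincar\'e map along a closed geodesic by perturbing the metric. That theorem is the essential missing ingredient in your proposal.

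The paper's route is also structurally simpler than yours and sidesteps the bookkeeping you flagged as an obstacle. Rather than enumerating subhorseshoes via Markov combinatorics $(Z,n)$, the paper observes (via \cite[\S4.3 and \S9]{MY}) that (H$\alpha$) holds for \emph{every} subhorseshoe of $K$ as soon as the Birkhoff invariant of every periodic point of the full horseshoe $K$ is nonzero --- a third-jet condition which Klingenberg--Takens makes $C^r$-generic in $g$ (for $r\geq 4$). Likewise, for the incommensurability condition it suffices to arrange that $\log|\lambda_p|/\log|\lambda_q|\notin\mathbb{Q}$ for every pair $p\neq q$ of periodic orbits of $K$; this is done pair by pair via Klingenberg--Takens applied to the first jet along a segment of $\gamma_p$ disjoint from $\gamma_q$. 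Any subhorseshoe then automatically contains a good pair, since it contains at least two periodic orbits of $K$. Thus no enumeration of subhorseshoes is needed at all --- only the (countable) enumeration of periodic orbits of $K$.
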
 

\begin{proof} By the results in Subsection 4.3 and Section 9 of \cite{MY}, every subhorseshoe $\widetilde{K}\subset K$ satisfies the property (H$\alpha$) whenever the so-called \emph{Birkhoff invariant} (cf. \cite[Appendix A]{MY}) of all periodic points of $\mathcal{R}$ in $K$ are non-zero. As it turns out, the non-vanishing of Birkhoff invariant is an open, dense and conjugation invariant condition on the third jet of a germ of area-preserving automorphism of $(\mathbb{R}^2,0)$ (compare with Lemma 32 in \cite{MoRo}). It follows from Klingenberg-Takens theorem \cite[Theorem 1]{KT} that the subset $\mathcal{V}$ of $g\in\mathcal{U}$ such that every subhorseshoe $\widetilde{K}\subset K$ satisfies the property (H$\alpha$) is $C^r$-Baire generic (for all $r\geq 4$). 

On the other hand, given any pair $p$ and $q$ of distinct periodic orbits in $K$, if we denote by $\gamma_p$ and $\gamma_q$ the corresponding $g$-geodesics on $N$, then we can select a piece $l\subset\gamma_p$ disjoint from $\gamma_q$ (because distinct geodesics intersect transversely) and we can apply Klingenberg-Takens theorem \cite[Theorem 2]{KT} to (the first jet of the Poincar\'e map along) $l$ to ensure that the logarithms of the unstable eigenvalues of $p$ and $q$ are incommensurable for a $C^r$-Baire generic subset $\mathcal{W}_{p,q}$ of $\mathcal{U}$ (for all $r\geq 2$). 

It follows that the subset 
$$\mathcal{U}^{**}=\mathcal{V}\cap\bigcap\limits_{\substack{p,q\in \textrm{Per}(\mathcal{R})\cap K \\ p\neq q}}\mathcal{W}_{p,q}$$ 
is a countable intersection of $C^r$-Baire generic subsets (for all $r\geq 4$) such that $\mathcal{U}^{**}\subset\mathcal{U}^*$. This proves the lemma.  
\end{proof}

\subsection{Description of $\mathcal{H}_{\phi_g,\Lambda}$} Let $\mathcal{H}_{\phi_g, \Lambda}$ be the set of functions $f$ such that there exists a finite collection $J$ of $C^1$-curves in $\Theta$ so that, for each $n\in \mathbb{N}$, the complement $V_n$ of the $1/n$-neighborhood of $J$ in $\Theta$ contains a finite collection $L_n$ of $C^1$-curves with the property that $F=\max_{\phi} f$ is $C^1$ on $V_n\setminus L_n$ and the gradient of $F|_{V_n\setminus L_n}$ is transverse to the stable and unstable directions of $\mathcal{R}$ at all points of $K\cap (V_n\setminus L_n)$. 

We want to show that: 
\begin{lemma}\label{l.H} $\mathcal{H}_{\phi_g, \Lambda}$ is dense. 
\end{lemma}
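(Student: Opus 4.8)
The plan is to show that an arbitrary $f_0\in C^s(S_gN,\mathbb{R})$ can be $C^s$-approximated by functions $f$ in $\mathcal{H}_{\phi_g,\Lambda}$, by arranging a transversality condition for $F=\max_\phi f$ against the (only H\"older, but controlled) stable and unstable directions of $\mathcal{R}$ on $K$. The first step is to understand the structure of $F$. Since $F(y)=\max_{0\le t\le t_+(y)}f(\varphi^t(y))$, on a cross section $\Sigma_i$ the function $F$ is, locally, a finite maximum of the smooth functions $y\mapsto f(\varphi^{\tau_j(y)}(y))$ obtained at the critical times $\tau_j(y)$ of $t\mapsto f(\varphi^t(y))$, together with the two endpoint contributions $f(y)$ and $f(\mathcal{R}(y))$. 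Away from the (codimension-one) loci where two such branches cross or where a critical point is degenerate, $F$ is $C^1$; collecting these bad loci over the relevant compact range of return times produces the finite families $J$ and $L_n$ of $C^1$-curves in the definition of $\mathcal{H}_{\phi_g,\Lambda}$. The set $J$ absorbs the "persistent" singular curves (degenerate critical points, endpoint competitions at $\partial\Theta$), while on $V_n\setminus L_n$ one has genuine $C^1$ smoothness with $F$ agreeing with a single smooth branch $f\circ\varphi^{\tau(\cdot)}$.

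The second step is the genericity of transversality. On $V_n\setminus L_n$, near a point $y\in K$, $F$ coincides with $y\mapsto f(\varphi^{\tau(y)}(y))$ for a $C^1$ function $\tau$, and a short computation using $\frac{d}{dt}f(\varphi^t y)=0$ at $t=\tau(y)$ gives $\nabla F(y)=(D\varphi^{\tau(y)})^*\big(\nabla f(\varphi^{\tau(y)}y)\big)$ modulo the flow direction, i.e. $\nabla F(y)$ is, up to the invertible transpose cocycle and the flow direction, the gradient of $f$ read at the point where the max is attained. Now I would perturb $f$ by adding a bump $\rho$ supported in a small flow box around one such attaining point; this changes $\nabla F$ at $y$ by $(D\varphi^{\tau})^*\nabla\rho$, which can be steered to any prescribed value. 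Since the stable/unstable directions of $\mathcal{R}$ at points of $K$ form a compact family of lines, non-transversality of $\nabla F|_{V_n\setminus L_n}$ to them at some point of $K\cap(V_n\setminus L_n)$ is a closed condition of positive codimension, so for a residual (indeed open-dense, working one $n$ at a time and intersecting) set of $f$ one achieves transversality at every point of $K\cap(V_n\setminus L_n)$. Running over $n\in\mathbb{N}$ and taking a countable intersection gives a residual — in particular dense — set of $f$ lying in $\mathcal{H}_{\phi_g,\Lambda}$, which is what Lemma~\ref{l.H} asserts; a standard diagonal/approximation argument upgrades "residual" to the explicit density statement.

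The main obstacle is controlling the singular set of $F$ well enough to guarantee it is contained in a \emph{finite} union of $C^1$-curves (the families $J$ and $L_n$), rather than something wilder, uniformly as the perturbation varies. Two points need care: first, the return time $t_+$ is bounded on the relevant neighborhood of $K$ but $t\mapsto f(\varphi^t y)$ can a priori have many critical points, so one must either invoke a generic nondegeneracy (finitely many, all nondegenerate) for the time-function — itself a $C^s$-generic condition on $f$ since it is a transversality of the $1$-jet of $t\mapsto f(\varphi^t y)$ to the zero section — or restrict attention to the compact piece of orbit and use real-analyticity/genericity to bound the branch count; second, one must ensure the competition loci between branches are smooth curves transverse enough that their $1/n$-neighborhoods behave well. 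I expect that handling the endpoint competitions (where the max jumps between an interior critical value and $f(\mathcal{R}(y))$ as $y$ crosses a curve) and the degenerate-critical-point locus is the delicate bookkeeping, but neither introduces a genuinely new difficulty beyond careful use of the implicit function theorem and parametric transversality. Once $\mathcal{H}_{\phi_g,\Lambda}$ is shown dense, Proposition~\ref{p.dimension-formula} applies to $H=F|_{V_n\setminus L_n}$ on subhorseshoes of $K\cap(V_n\setminus L_n)$, which is the purpose this lemma serves downstream.
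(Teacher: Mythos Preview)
Your outline captures the two-step structure of the paper's argument --- first control the singular locus of $F=\max_\phi f$, then perturb $f$ to make $\nabla F$ transverse to $E^s,E^u$ on $K$ off that locus --- but there is a genuine gap in the second step.

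You claim that ``non-transversality of $\nabla F|_{V_n\setminus L_n}$ to [the stable/unstable lines] at some point of $K\cap(V_n\setminus L_n)$ is a closed condition of positive codimension, so for a residual (indeed open-dense \dots) set of $f$ one achieves transversality at every point.'' This reasoning would be fine if $K$ were finite or countable, but $K$ is an uncountable Cantor set. For a \emph{single} point $x\in K$, the bad perturbation parameters form a codimension-one set; the union over \emph{all} $x\in K$ of these codimension-one sets can easily be the whole parameter space. A bump perturbation near one attaining point fixes transversality there but has no reason to fix it on the rest of $K$. What the paper actually uses here is the hypothesis $\dim(\Lambda_0)<2$, equivalently $\dim(K)<1$ by Lemma~\ref{l.dimK}: one perturbs by a two-parameter linear family $f_v(x_1,x_2,t)=f(x_1,x_2,t)-v_1x_1-v_2x_2$ near the relevant graph, and a covering argument (Proposition~2.7 in \cite{CMM}) shows that the set of bad $v\in\mathbb{R}^2$ has Lebesgue measure $O(\delta^{1-d})$ with $\dim(K)<d<1$, hence is a null set. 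You never invoke $\dim(K)<1$, and without it the transversality step fails.

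A secondary point: your first step (``the singular set of $F$ is contained in a finite union of $C^1$-curves'') is asserted rather than proved; you note yourself that this is ``the main obstacle'' and offer only heuristics. The paper handles this with two preparatory lemmas: Lemma~\ref{l.N} imposes, via Sard's theorem applied to a three-parameter family $f-cx_3^3/6-bx_3^2/2-ax_3$, explicit regular-value conditions on $\partial^k f/\partial x_3^k$ ($k=1,2,3$) and their restrictions, which force the degenerate-critical-point locus and the endpoint locus to be genuine $C^1$-curves; Lemma~\ref{l.M} then perturbs again so that the branch-competition loci $g_{ji}^{-1}(0)$ are also $C^1$-curves. This is the bookkeeping you allude to, and it does require concrete jet conditions rather than a soft transversality appeal.
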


For this sake, we need two auxiliary sets $\mathcal{M}_{\phi_g, \Lambda} \subset \mathcal{N}_{\phi_g, \Lambda}$ of functions defined as follows. 

Once again we cover $\Lambda$ with a finite number of tubular neighborhoods $U_l$, $1\leq l\leq m$ whose boundaries are the good cross-sections $\Theta=\bigsqcup\limits_{i=1}^k\Sigma_i$ mentioned above. For each $l$, let us fix coordinates $(x_1(l),x_2(l),x_3(l))$ on $U_l$ such that $x_3(l)$ is the flow direction and $U_l\cap\Theta=\{x_3(l)=0\}\cup\{x_3(l)=1\}$. 

\begin{definition}\label{d.N}
We say that $f\in\mathcal{N}_{\phi_g, \Lambda}$ whenever: 
\begin{itemize}
\item[(i)] $0$ is a regular value of the restriction of $\frac{\partial f}{\partial x_3(l)}$ to $U_l\cap\Theta$; 
\item[(ii)] $0$ is a regular value of $\frac{\partial^3 f}{\partial x_3(l)^3}$; 
\item[(iii)] $0$ is a regular value of the functions $\frac{\partial^2 f}{\partial x_3(l)^2}$ and $\frac{\partial^2 f}{\partial x_3(l)^2}|_{\{\frac{\partial^3 f}{\partial x_3(l)^3}=0\}}$; 
\item[(iv)] $0$ is a regular value of the functions $\frac{\partial f}{\partial x_3(l)}|_{\{\frac{\partial^2 f}{\partial x_3(l)^2}=0\}}$ and $\frac{\partial f}{\partial x_3(l)}|_{\{\frac{\partial^3 f}{\partial x_3(l)^3}=0\}\cap \{\frac{\partial^2 f}{\partial x_3(l)^2}=0\}}$,
\end{itemize} 
for each $1\leq l\leq m$. 
\end{definition}

\begin{lemma}\label{l.N} $\mathcal{N}_{\phi_g,\Lambda}$ is dense. 
\end{lemma}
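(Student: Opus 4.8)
The plan is to recognize that all four conditions (i)--(iv) in Definition \ref{d.N} are instances of the parametric transversality theorem (Thom's jet-transversality theorem), applied finitely many times (once for each of the finitely many tubular charts $U_l$, $1\le l\le m$), and then to combine them. First I would fix $l$ and work in the coordinates $(x_1,x_2,x_3)=(x_1(l),x_2(l),x_3(l))$ on $U_l$, where $x_3$ is the flow direction and $U_l\cap\Theta$ is the pair of slices $\{x_3=0\}\sqcup\{x_3=1\}$. The key observation is that each condition asks that $0$ be a regular value of a map built from partial derivatives of $f$ in the $x_3$-direction, possibly restricted to a submanifold that is itself cut out as a regular level set of an earlier such map. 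Since "regular value" is an open condition once we have compactness (the relevant sets are closed inside the compact $\closure{U_l}$, or can be arranged to be so by shrinking), it suffices to prove density of each condition, and then density of their finite intersection follows because a finite intersection of open dense sets is dense.

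Next I would establish density of each individual condition by a one-parameter (or finite-parameter) perturbation argument of the form $f \mapsto f + \sum_j \lambda_j\, h_j$, where the $h_j$ are chosen so that the derivative of the relevant jet-map with respect to the parameters $\lambda=(\lambda_j)$ is surjective. Concretely, to handle (ii), the map in question is $u \mapsto \partial^3 f/\partial x_3^3(u)$ on $U_l$; choosing a bump function times $x_3^3/6$ (localized near any prescribed point, and patched over a finite cover of the compact slices) makes the parametrized family transverse to $\{0\}$, so by the parametric transversality theorem the set of parameters for which $0$ is a regular value is residual, hence dense, and in particular arbitrarily small good parameters exist; this proves $f$ can be $C^\infty$-approximated (hence $C^s$-approximated) by an $f$ satisfying (ii). Conditions (i), (iii), (iv) are handled the same way, but one must be slightly careful with the \emph{restricted} maps such as $\frac{\partial^2 f}{\partial x_3^2}\big|_{\{\partial^3 f/\partial x_3^3 = 0\}}$: here one first arranges, by the argument just given, that $\{\partial^3 f/\partial x_3^3=0\}$ is a smooth submanifold (this is exactly condition (ii)), and then one performs a \emph{second} perturbation supported so as not to destroy (ii)--e.g.\ a perturbation by functions vanishing to high order along that submanifold, or simply by again invoking jet-transversality for the stratified map $(\partial^3 f/\partial x_3^3,\ \partial^2 f/\partial x_3^2)$ into $\reals^2$ and asking $0$ to be a regular value of the appropriate component. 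The cleanest bookkeeping is to phrase (ii)+(iii)+(iv) together as a single statement: $0\in\reals^{k}$ is a regular value of the map whose components are the successive derivatives, stratified in the order dictated by the nested level sets; this is a finite jet-transversality statement and is handled by one application of Thom's theorem to a suitably large jet.

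Finally, having proved density inside each chart $U_l$, I would combine them. Because the perturbations used for chart $U_l$ can be taken supported in $U_l$ (or, where the $U_l$ overlap, one works on the finite cover and uses a partition of unity, noting that being a regular value is a local condition along the compact slices), and because there are only finitely many $l$, one applies the single-chart result $m$ times in succession, each time making an arbitrarily small correction, to land in $\bigcap_{l=1}^m \{\text{(i)--(iv) hold for } l\}=\mathcal{N}_{\phi_g,\Lambda}$. Openness of each condition then guarantees that the small corrections for later charts do not spoil the conditions already achieved for earlier ones, provided they are small enough. I expect the main obstacle to be purely bookkeeping: correctly setting up the nested jet-transversality statement for the restricted derivatives in (iii)--(iv) so that a single application of Thom's theorem (rather than an awkward sequence of mutually-interfering perturbations) yields all of them simultaneously, and checking that the localized perturbations $x_3^j\cdot(\text{bump})$ genuinely make the parametrized jet-evaluation map a submersion onto the relevant jet space along the compact slices $U_l\cap\Theta$ — a standard but slightly tedious verification.
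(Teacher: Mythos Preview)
Your approach is correct but takes a different route from the paper. The paper's proof is considerably more elementary and explicit: rather than invoking Thom's jet-transversality theorem, it introduces the three-parameter family
\[
f_{a,b,c}(x_1,x_2,x_3) = f(x_1,x_2,x_3) - c\,x_3^3/6 - b\,x_3^2/2 - a\,x_3
\]
and applies Sard's theorem directly, choosing the parameters \emph{sequentially}: first a small regular value $c$ of $\partial^3 f/\partial x_3^3$, then a small regular value $b$ of $\partial^2 f/\partial x_3^2 - cx_3$ (and of its restriction to $\{\partial^3 f/\partial x_3^3 = c\}$), and finally a small regular value $a$ of the remaining first-order restrictions. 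The key point --- which entirely sidesteps your concern about ``mutually-interfering perturbations'' --- is that the monomials $x_3, x_3^2, x_3^3$ are chosen so that the perturbation by $a$ does not affect $\partial^2/\partial x_3^2$ or $\partial^3/\partial x_3^3$, and the perturbation by $b$ does not affect $\partial^3/\partial x_3^3$; hence later choices cannot spoil earlier ones, and no nested jet bookkeeping is needed. What your approach buys is generality and a single clean invocation of a standard theorem; what the paper's approach buys is a short, fully explicit argument requiring nothing beyond Sard's theorem.
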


\begin{proof} Given a function $f$, let us consider the three-parameter family 
$$f_{a,b,c}(x_1,x_2,x_3) = f(x_1,x_2,x_3) - c x_3^3/6 - b x_3^2/2 -a x_3$$
where $a,b,c\in\mathbb{R}$. 

By Sard's theorem, we can fix first a very small regular value $c\approx 0$ of $\frac{\partial^3 f}{\partial x_3^3}$, then a very small regular value $b\approx 0$ of both $\frac{\partial^2 f}{\partial x_3^2} - c x_3$ and its restriction to $\{\frac{\partial^3 f}{\partial x_3(l)^3}=c\}$, and finally a very small regular value $a\approx 0$ of $(\frac{\partial f}{\partial x_3} - c x_3^2/2 - b x_3)|_{\{\frac{\partial^2 f}{\partial x_3^2} - cx_3 = b\}}$, $(\frac{\partial f}{\partial x_3} - c x_3^2/2 - b x_3)|_{\{\frac{\partial^3 f}{\partial x_3^3} = c\}\cap\{\frac{\partial^2 f}{\partial x_3^2} - cx_3 = b\}}$ and $(\frac{\partial f}{\partial x_3} - c x_3^2/2 - b x_3)|_{\{x_3 = 0\}\cup\{ x_3 = 1\}}$. 

For a choice of parameters $(a,b,c)$ as above, we have that $f_{a,b,c}\in\mathcal{N}_{\phi_g,\Lambda}$: indeed, this happens because $\frac{\partial^3 f_{a,b,c}}{\partial x_3^3} = \frac{\partial^3 f}{\partial x_3^3} - c$, $\frac{\partial^2 f_{a,b,c}}{\partial x_3^2} = \frac{\partial^2 f}{\partial x_3^2} - c x_3$ and $\frac{\partial f_{a,b,c}}{\partial x_3} = \frac{\partial f}{\partial x_3} - c x_3^2/2 - b x_3$. Since $f_{a,b,c}$ is arbitrarily close to $f$, this proves the lemma. 
\end{proof}

By definition, if $f\in\mathcal{N}_{\phi_g, \Lambda}$, then $\mu_l:=\{\frac{\partial f}{\partial x_3(l)}=0\}\cap U_l$ is a curve (thanks to (i)), and $J_l:=\{\frac{\partial f}{\partial x_3(l)}=0\}\cap \{\frac{\partial^2 f}{\partial x_3(l)^2}=0\}$ is a curve intersecting the surface $\{\frac{\partial^3 f}{\partial x_3(l)^3}=0\}$ at a finite set $\Pi_l$ of points (thanks to (ii), (iii) and (iv)). 

Note that if $(x_1,x_2,0), (x_1,x_2,1)\notin \mu_l$ and the piece of orbit $(x_1,x_2,z)$, $0\leq z\leq 1$, doesn't intersect $J_l$, then there is a neighborhood $V$ of $(x_1, x_2, 0)\in U_l\cap\Theta$ and a finite collection of disjoint graphs $\{(x,y,\psi_j(x,y)): (x,y,0)\in V\}$, $1\leq j\leq n$ such that if $F(x_1',x_2',t')=\max_{\phi} f(x_1',x_2',t')$ with $(x_1',x_2',0)\in V$, then $t'=\psi_j(x_1',x_2')$ for some $j$. 

\begin{definition}\label{d.M} We say that $f\in\mathcal{M}_{\phi_g, \Lambda}$ if $f\in\mathcal{N}_{\phi_g, \Lambda}$ and there exists a finite collection $J$ of $C^1$-curves in $\Theta$ so that, for each $n\in \mathbb{N}$, the complement $V_n$ of the $1/n$-neighborhood of $J$ in $\Theta$ contains a finite collection $L_n$ of $C^1$-curves with the property that for each $y\in V_n\setminus L_n$, there is an unique $0\leq t(y)\leq t_+(y)$ with $F(y)=f(\phi^{t(y)}(y))$, and, moreover, the function $y\mapsto \phi^{t(y)}(y)$ is $C^1$ on $V_n\setminus L_n$. 
\end{definition}

\begin{lemma}\label{l.M} $\mathcal{M}_{\phi_g,\Lambda}$ is dense. 
\end{lemma}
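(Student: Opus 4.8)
The plan is to show that in fact $\mathcal{N}_{\phi_g,\Lambda}\subset\mathcal{M}_{\phi_g,\Lambda}$, so that density of $\mathcal{M}_{\phi_g,\Lambda}$ follows immediately from Lemma \ref{l.N}. Thus I fix $f\in\mathcal{N}_{\phi_g,\Lambda}$ and must produce the finite collection $J$ of $C^1$-curves in $\Theta$ and, for each $n$, the collection $L_n$ of $C^1$-curves in $V_n$ with the stated properties. The natural candidate for $J$ is the union over $l$ of the projections to $\Theta$ (along the flow) of the curves $J_l=\{\frac{\partial f}{\partial x_3(l)}=0\}\cap\{\frac{\partial^2 f}{\partial x_3(l)^2}=0\}$ together with their "shadows" at both ends $\{x_3(l)=0\}$ and $\{x_3(l)=1\}$, plus the projections of the finite set $\Pi_l$; this is a finite union of $C^1$-curves by the regular-value conditions (i)--(iv) in Definition \ref{d.N}. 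Away from $J$, for a piece of orbit $\{(x_1,x_2,z):0\le z\le 1\}$ in $U_l$ not meeting $J_l$ and with endpoints off $\mu_l$, the remark immediately preceding Definition \ref{d.M} already gives a neighborhood $V$ and finitely many disjoint $C^1$ graphs $\psi_j$ whose union contains all the points realizing $F$. It remains to arrange that the argmax is genuinely unique and varies $C^1$-ly after removing one more finite family of curves.

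The key steps, in order, are as follows. First, I would make precise the local picture: on each $U_l$, the function $z\mapsto f(x_1,x_2,z)$ has, by (i), only nondegenerate critical points in the interior except along $\mu_l$, and by (ii)--(iv) the only way two interior local maxima can have equal value, or a local max can coincide in value with a boundary value, or a degenerate critical point can appear, is along a codimension-one locus in $(x_1,x_2)$-space; I collect all these loci (interior-max equals interior-max, interior-max equals boundary value at $z=0$ or $z=1$, boundary value at $z=0$ equals boundary value at $z=1$) into the curves that, together with $J$, form the bad set. Since $F$ is obtained by comparing finitely many $C^1$ branches $\psi_j$ (the interior local maxima) and the two boundary "branches" $z=0$, $z=1$, the locus where the maximum among these finitely many $C^1$ functions is attained twice is a finite union of $C^1$-curves off $J$ (the regularity/transversality needed here is exactly the regular-value content of Definition \ref{d.N}, which guarantees the relevant differences of branches have $0$ as a regular value). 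I then take $L_n$ to be the part of this union lying in $V_n$: it is a finite union of $C^1$-curves, the argmax $t(y)$ is unique on $V_n\setminus L_n$, it equals one of the $\psi_j(x_1,x_2)$ (or $0$ or $t_+(y)$) locally, hence $y\mapsto\phi^{t(y)}(y)$ is $C^1$ there because both $\psi_j$ and the flow are. Second, I would check the global consistency across the finitely many cross sections $\Sigma_i$ and the tubular neighborhoods $U_l$: a return orbit of $y\in\Theta$ passes through finitely many of the $U_l$, and $F(y)$ is the maximum of finitely many such local contributions, so the bad set for $y$ is a finite union of the (finitely many) local bad curves, pulled back by the (finitely many, $C^1$) first-return maps between cross sections; this keeps $J$ and each $L_n$ finite unions of $C^1$-curves. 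Third, I would verify that $\operatorname{dist}(L_n,J)$ issues are handled correctly: $V_n$ is by definition the complement of the $1/n$-neighborhood of $J$, so $L_n\subset V_n$ is automatically bounded away from $J$, and as $n\to\infty$ the curves $L_n$ exhaust the bad locus outside $J$.

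I expect the main obstacle to be the second step — controlling the combinatorics of the first-return dynamics. A single point $y\in\Theta$ may, before returning, traverse several of the tubular charts $U_l$, so $F(y)$ is a maximum over several local families of branches whose indices and even whose cardinality jump as $y$ varies; one must argue that the set of $y$ for which the orbit-piece is "tangent" to one of the bad configurations (endpoint of the orbit segment landing on $\mu_l$, orbit segment tangent to $J_l$, transition from one chart to the next coinciding with a comparison locus) is itself a finite union of $C^1$-curves, and this requires transversality of the flow to the cross-sections (built into the good-cross-section construction of \cite{MoRo}) combined with the regular-value hypotheses of Definition \ref{d.N} applied chart by chart. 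Once this bookkeeping is set up, the $C^1$-dependence of $t(y)$ and of $y\mapsto\phi^{t(y)}(y)$ off the bad curves is a direct consequence of the implicit function theorem applied to the (nondegenerate, by (ii)--(iv)) critical point equation $\frac{\partial f}{\partial x_3(l)}=0$ and of the smoothness of the geodesic flow.
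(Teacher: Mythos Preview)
Your plan contains a genuine gap. You propose to show that $\mathcal{N}_{\phi_g,\Lambda}\subset\mathcal{M}_{\phi_g,\Lambda}$, and the crux of your argument is the parenthetical claim that ``the regular-value content of Definition~\ref{d.N} \dots\ guarantees the relevant differences of branches have $0$ as a regular value.'' This is not correct. Conditions (i)--(iv) in Definition~\ref{d.N} concern only the successive $x_3$-derivatives of $f$; they govern the structure of the critical set of $z\mapsto f(x_1,x_2,z)$ and ensure that the graphs $\psi_j$ are well defined and $C^1$ away from $J$. They say nothing about the functions
\[
g_{ji}(x_1,x_2)=f(x_1,x_2,\psi_j(x_1,x_2))-f(x_1,x_2,\psi_i(x_1,x_2)),
\]
which compare \emph{values} of $f$ at two distinct heights. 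A separable function $f(x_1,x_2,x_3)=h(x_3)+k(x_1,x_2)$ with $h$ having two nondegenerate interior local maxima of equal height satisfies (i)--(iv) for suitable $h$, yet $g_{ji}\equiv 0$, so the ``tie locus'' is the whole cross-section rather than a finite union of $C^1$-curves. Hence $\mathcal{N}_{\phi_g,\Lambda}\not\subset\mathcal{M}_{\phi_g,\Lambda}$ in general, and your construction of $L_n$ breaks down.

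The paper's proof acknowledges exactly this point: starting from $f\in\mathcal{N}_{\phi_g,\Lambda}$, it performs a \emph{further} small perturbation of $f$ (following Lemma~19 of \cite{MoRo}) so that $0$ becomes a simultaneous regular value of all the $g_{ji}$ on each $V_N$; only then do the tie loci $g_{ji}^{-1}(0)$ form a finite family of $C^1$-curves $L_n$, and uniqueness of $t(y)$ on $V_n\setminus L_n$ follows. In other words, density of $\mathcal{M}_{\phi_g,\Lambda}$ is obtained by approximating each $f\in\mathcal{N}_{\phi_g,\Lambda}$, not by an inclusion $\mathcal{N}_{\phi_g,\Lambda}\subset\mathcal{M}_{\phi_g,\Lambda}$. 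Your outline of the remaining bookkeeping (handling multiple charts, pulling back by first-return maps, the $C^1$ dependence of $t(y)$ via the implicit function theorem) is reasonable, but it must be carried out \emph{after} this additional perturbation, not in lieu of it.
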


\begin{proof} Consider $f\in\mathcal{N}_{\phi_g,\Lambda}$ as above. Our discussion so far says that the curves $\mu_l$ and the projections of the curves $J_l$ in the flow direction ($x_3$-coordinate) is a  finite union $J$ of $C^1$ curves contained in $\Theta$ such that, for each $y\in D_\mathcal{R}\setminus J$, the value $F(z)$ for $z$ near $y$ is described by the values of $f$ at a finite collection of graphs transverse to the flow direction. 

In other terms, using the notation in the paragraph before Definition \ref{d.M}, our task is reduced to perturb $f$ in such a way that $F(x_1',x_2',t')$ are given by the values of $f$ on an \emph{unique} graph $(x_1',x_2',\psi(x_1',x_2'))$. 

In this direction, we employ the argument from Lemma 19 in \cite{MoRo}. More precisely, given $N\in\mathbb{N}$, the value of $F$ at any point $(x,y)\in V_N$ is described by finitely many disjoint  graphs $\psi_j$, $1\leq j\leq n$ (where $n$ depends on $N$). As it is explained in Lemma 19 in \cite{MoRo}, we can perform small perturbations of $f$ on $V_N$ in such a way that $0$ is a simultaneous regular value of the functions $(x_1,x_2)\mapsto g_{ji}(x_1,x_2):= f(x_1,x_2,\psi_j(x_1,x_2))-f(x_1,x_2,\psi_i(x_1,x_2))$ for all choices of $1\leq j<i\leq n$. In this situation, $L_n=\bigcup\limits_{1\leq j<i\leq n} g_{ji}^{-1}(0)$ is a finite collection of $C^1$-curves such that, for each $y\in V_n\setminus L_n$, the values of $F$ near $y$ are described by the values of $f$ at an unique graph. Hence, for each $y\in V_n\setminus L_n$, one has that $F(y)=f(\phi^{t(y)}(y))$ for an unique $0\leq t(y)\leq t_+(y)$ depending in a  $C^1$ way on $y$. 

This shows the lemma. 
\end{proof}

At this point, we are ready to establish the denseness of $\mathcal{H}_{\phi_g, \Lambda}$: 

\begin{proof}[Proof of Lemma \ref{l.H}] Given a function $f\in C^s(S_gN,\mathbb{R})$, we apply Lemma \ref{l.M} in order to perform a preliminary perturbation so that $f\in\mathcal{M}_{\phi_g,\Lambda}$. In this context, our task is simply to prove that some appropriate perturbations of $f$ render the gradient of $F=\max_{\phi} f$ transverse to the stable and unstable directions at all points of $K\setminus(\bigcup\limits_{n\in\mathbb{N}} L_n\cup J)$. 

For this sake, we fix $n\in\mathbb{N}$ and consider a point $x\in K\cap(V_n\setminus L_n)$. Recall that, in a small neighborhood of $x$, the values of $F=\max_{\phi} f$ are given by the values of $f$ on a graph $(x_1,x_2,\psi(x_1,x_2))$. Since the Hausdorff dimension of $K$ is strictly smaller than one (cf. Lemma \ref{l.dimK}), we can employ the argument in Proposition 2.7 in \cite{CMM} to find arbitrarily small vectors $v=(v_1, v_2)\in\mathbb{R}^2$ such that the functions $f_v(x_1,x_2,t):=f(x_1,x_2,t)-v_1x_1-v_2x_2$ near the graph $(x_1,x_2,\psi(x_1,x_2))$ (and coinciding with $f$ elsewhere) have the property that the gradient of $F_v:=\max_{\phi} f_v$ is transverse to the stable and unstable directions of any point of $K$ close to $(x_1,x_2)$. Because $n\in \mathbb{N}$ and $x\in K\cap (V_n\setminus L_n)$ were arbitrary, the proof of the lemma is complete. 
\end{proof}

\subsection{Lower semicontinuity} The first step towards the lower semicontinuity $D_u(t)$ and $D_s(t)$ is the following analog of Proposition 2.10 in \cite{CMM}: 

\begin{proposition}\label{p.210'} Suppose that $g\in\mathcal{U}$ and $f\in\mathcal{H}_{\phi_g,\Lambda}$. Given $t\in\mathbb{R}$ such that $D_u(t)>0$, resp. $D_s(t)>0$, and $0<\eta<1$, there exist $\delta>0$ and a (complete) subhorseshoe $K'\subset K_{t-\delta}$ such that 
$$\textrm{dim}((K')^u)>(1-\eta)D_u(t) \quad \textrm{and} \quad \textrm{dim}((K')^s)>(1-\eta)D_u(t)$$
resp.
$$\textrm{dim}((K')^u)>(1-\eta)D_s(t) \quad \textrm{and} \quad \textrm{dim}((K')^s)>(1-\eta)D_s(t)$$

In particular, $D_u(t)=D_s(t)=d_u(t)=d_s(t)$ for all $t\in\mathbb{R}$. 
\end{proposition}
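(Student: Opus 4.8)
The plan is to mimic the strategy of Proposition 2.10 in \cite{CMM}, which produces, near a level $t$ where the limit capacity is positive, a genuine subhorseshoe entirely contained in a slightly lower level set $K_{t-\delta}$ whose stable and unstable Cantor sets nearly realize $D_u(t)$ (resp. $D_s(t)$). The key new ingredient in the flow setting is already in place: by the reduction via Poincar\'e maps, the function $F=\max_{\phi_g} f$ plays the role of the roof/return observable, and by Lemma \ref{l.H} the set $\mathcal{H}_{\phi_g,\Lambda}$ is exactly tailored so that $F$ is $C^1$ with gradient transverse to the stable/unstable directions of $\mathcal{R}$ off a controlled exceptional set $\bigcup_n L_n\cup J$ of $C^1$-curves. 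Since $\dim(K)<1$ (Lemma \ref{l.dimK}) this exceptional set is dynamically negligible, so the arguments of \cite{CMM} that require regularity of the observable apply verbatim on $V_n\setminus L_n$.

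First I would fix $t$ with $D_u(t)>0$ and $0<\eta<1$. By definition of the limit capacity of $K^s_t$ and an approximation by Markov sub-partitions, for large $N$ one finds a finite collection of admissible words (a subshift of finite type inside the dynamics of $\mathcal{R}$ on $K$) whose associated subhorseshoe $\widetilde K$ has stable Cantor set of dimension arbitrarily close to $D_u(t)$; here one uses that $K_t^s=\bigcap_{s<t}K_s^s$ together with upper semicontinuity (Proposition \ref{p.Du-upper-sc}) to get that this can be arranged with $\widetilde K\subset K_{t-\delta}$ for some $\delta>0$ — concretely, one throws away the finitely many cylinders meeting the (closed) set $\{m_{\mathcal{R},K,F}\ge t-\delta\}$ and checks that what remains still carries almost the full dimension. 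The point where $\mathcal{H}_{\phi_g,\Lambda}$ enters is to guarantee that after this combinatorial truncation one may further pass to a \emph{complete} subhorseshoe (one that is the maximal invariant set of a union of Markov rectangles) while keeping the dimension loss below $\eta D_u(t)$: this is exactly the content of Proposition 2.10 in \cite{CMM}, whose proof is a soft argument about subshifts of finite type and renormalization of Cantor sets that does not see the difference between diffeomorphisms and Poincar\'e maps.

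Next I would invoke Proposition \ref{p.dimension-formula} (available since $g\in\mathcal{U}^*$ — note $\mathcal{U}\subset\mathcal{U}^*$ up to shrinking by Lemma \ref{l.U*generic}) applied to the complete subhorseshoe $K'$ just produced and to the $C^1$ observable $H=F|_{V_n\setminus L_n}$, whose gradient is transverse to the stable and unstable directions of $\mathcal{R}$ at points of $K'$ by the defining property of $\mathcal{H}_{\phi_g,\Lambda}$; this yields $\dim(F(K'))=\min\{\dim(K'),1\}$. Combined with the standard relation $\dim(K')=\dim((K')^s)+\dim((K')^u)$ for horseshoes satisfying property (H$\alpha$) and the fact that $(K')^s$ and $(K')^u$ have \emph{equal} dimension when the defining dynamics has incommensurable unstable eigenvalues at a pair of periodic points — again guaranteed by membership in $\mathcal{U}^*$ — one gets $\dim((K')^u)=\dim((K')^s)>(1-\eta)D_u(t)$, and symmetrically for $D_s(t)$. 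Letting $\eta\to 0$ shows $D_u(t)\le d_u(t)$ and $D_s(t)\le d_s(t)$ (where $d_u,d_s$ denote the suprema of $\dim$ of stable/unstable Cantor sets of subhorseshoes inside $K_{t-\delta}$ for some $\delta>0$, i.e.\ the ``lower'' versions), while the reverse inequalities $d_u(t)\le D_u(t)$, $d_s(t)\le D_s(t)$ and the equalities $D_u(t)=D_s(t)$, $d_u(t)=d_s(t)$ follow from monotonicity and the dimension/eigenvalue relations above; assembling these gives the final assertion $D_u(t)=D_s(t)=d_u(t)=d_s(t)$.

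The main obstacle I anticipate is the passage from the purely combinatorial subhorseshoe (obtained by discarding cylinders that hit a neighborhood of $\{m\ge t-\delta\}$) to a \emph{complete} subhorseshoe losing only $\eta D_u(t)$ in dimension, while simultaneously controlling that the exceptional curve-set $\bigcup_n L_n\cup J$ does not interfere — one must choose $n$ large (so that $V_n$ captures almost all of $K'$ in the Hausdorff-dimension sense, using $\dim(K)<1$ so that the $1/n$-neighborhood of $J$ has small intersection with $K$) and only then apply the transversality provided by $\mathcal{H}_{\phi_g,\Lambda}$ and the dimension formula. This is precisely the step where the present flow setting needs the delicate regularity bookkeeping of Section 2 of the present paper rather than a literal quotation of \cite{CMM}, but once the observable $F$ is known to be $C^1$ with transverse gradient on $V_n\setminus L_n$, the remaining estimates are the same renormalization-of-Cantor-sets arguments as in \cite[Proposition 2.10]{CMM}.
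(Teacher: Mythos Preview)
Your overall architecture — trim $K$ to avoid the exceptional set $J\cup\bigcup_n L_n$ using $\dim(K)<1$ (the paper does this in two passes via Lemma 18 of \cite{MoRo}, first to avoid $J$, then to avoid $L_{n_0}$ for a suitably large $n_0$), and only then apply \cite[Proposition 2.10]{CMM} verbatim to the resulting subhorseshoe $K''$ on which $F=\max_{\phi}f$ is $C^1$ with transverse gradient — is exactly what the paper does, and your final paragraph identifies this order of operations correctly.

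The middle paragraph, however, is a wrong turn. The hypothesis is only $g\in\mathcal{U}$, not $g\in\mathcal{U}^*$ (your ``$\mathcal{U}\subset\mathcal{U}^*$ up to shrinking'' is backwards), and in any case Proposition \ref{p.dimension-formula} is irrelevant here: it computes $\dim(F(K'))$, which says nothing about $\dim((K')^s)$ or $\dim((K')^u)$; that image-dimension formula enters only later, in Proposition \ref{p.Du-lower-sc}, to connect subhorseshoes to the Lagrange spectrum. The claim that incommensurable unstable eigenvalues force $\dim((K')^s)=\dim((K')^u)$ is also false — that is not what incommensurability is for. What actually delivers \emph{both} inequalities $\dim((K')^u)>(1-\eta)D_u(t)$ and $\dim((K')^s)>(1-\eta)D_u(t)$ is already inside the construction of \cite[Proposition 2.10]{CMM}: the Poincar\'e map of a geodesic flow is conservative (it preserves the area form induced by the Liouville measure on the cross-sections), so stable and unstable cylinders $I^s(\alpha^T)$ and $I^u(\alpha)$ have comparable lengths, and the ``good positions'' combinatorics then yields the two bounds simultaneously. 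As a further remark, ``throw away the cylinders meeting $\{m_{\mathcal{R},K,F}\geq t-\delta\}$'' is too naive to produce $K'\subset K_{t-\delta}$: for small $\delta$ essentially every cylinder of $C^u(t,r_0)$ meets that set; the strict drop by a definite $\delta>0$ again comes from the transversality of $\nabla F$ exploited in the good-positions argument of \cite{CMM}, not from a soft compactness or upper-semicontinuity step.
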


\begin{proof} By symmetry (i.e., replacing the flow by its inverse), it suffices to prove the statement when $D_u(t)>0$. 

We consider the construction of $K$ in terms of its Markov partition $R_a$, $a\in\mathcal{A}$, introduced above. Given an admissible\footnote{i.e., there is a point $x\in K$ such that $\mathcal{R}^i(x)\in R_{a_i}$ for all $i=0,\dots, k$. } word $\alpha=(a_0,\dots,a_k)$ on the alphabet $\mathcal{A}$, denote by $I^u(\alpha) = \pi_{a_0}(\{x\in R_{a_0}: \mathcal{R}^i(x)\in R_{a_i} \, \forall i=1,\dots, k\})$. In this setting, the \emph{unstable scale} $r^u(\alpha)$ is $\lfloor\log (1/(\textrm{length of } I^u(\alpha)))\rfloor$. 

For each $r\in\mathbb{N}$, define 
$$P^u_r := \{\alpha = (a_0,\dots,a_k) \textrm{ admissible word}:  r^u(\alpha)\geq r \textrm{ and } r^u(a_0,\dots, a_{k-1})<r\},$$
$$C^u(t,r) :=  \{\alpha\in P^u_r: I^u(\alpha)\cap K^u_t\neq\emptyset\}$$
and $N_u(t,r) := \# C^u(t,r)$. 

Of course, we have similar notions of $I^s(\beta)$, etc.  

Denote by $\tau=\eta/100$. By the definition of limit capacity, we can fix $r_0$ sufficiently large such that 
$$\left|\frac{\log N_u(t,r)}{r} - D_u(t)\right|<\frac{\tau}{6} D_u(t)$$ 
for all $r\geq r_0$. 

Recall that the fact that $f\in\mathcal{H}_{\phi_g,\Lambda}$ is associated to a finite collection $J$ of $C^1$-curves in $\Theta$ so that, for each $n\in \mathbb{N}$, the complement $V_n$ of the $1/n$-neighborhood of $J$ in $\Theta$ contains a finite collection $L_n$ of $C^1$-curves with the property that $F=\max_{\phi} f$ is $C^1$ on $V_n\setminus L_n$ and the gradient of $F|_{V_n\setminus L_n}$ is transverse to the stable and unstable directions of $\mathcal{R}$ at all points of $K\cap (V_n\setminus L_n)$. 

As it is explained in Lemma 18 in \cite{MoRo}, it is possible to select a subset $B^u(r_0)\subset C^u(t,r_0)$ such that 
$$\frac{\log\#B^u(r_0)}{r_0}\geq \frac{\log N_u(t,r_0)}{r_0} - \frac{\tau}{6} D_u(t)$$
and the subhorseshoe $K^{(r_0)}\subset K$ associated to the admissible words in $B^u$ is disjoint from $J$. 

By selecting $n_0\in\mathbb{N}$ large so that $K^{(r_0)}\subset V_{n_0}$ and by applying again the arguments in Lemma 18 in \cite{MoRo}, we can find a subset $B^u\subset B^u(r_0)$ such that 
$$\frac{\log\#B^u}{r_0}\geq \frac{\log B^u(r_0)}{r_0} - \frac{\tau}{6} D_u(t)$$
and the subhorseshoe $K''\subset K$ associated to the admissible words in $B^u$ is contained in $V_n\setminus L_n$. 

In summary, we obtained a subset $B^u\subset C^u(t,r_0)$ with 
$$\left|\frac{\log\#B_u}{r} - D_u(t)\right|<\frac{\tau}{2} D_u(t)$$ 
such that the subhorseshoe $K''\subset K$ associated to $B^u$ is contained in $V_{n_0}\setminus L_{n_0}$ and, \emph{a fortiori}, the gradient of $F=\max_{\phi} f$ is transverse to the stable and unstable directions at all points of $K''$. 

In this scenario, we can run the same arguments from Proposition 2.10 in \cite{CMM} in order to locate a subhorseshoe $K'\subset K''$ with the desired features. 
\end{proof}

At this stage, we are ready to show the lower semicontinuity of $D_u(t)$ and $D_s(t)$. 

\begin{proposition}\label{p.Du-lower-sc} For $g\in\mathcal{U}^*$ and $f\in\mathcal{H}_{\phi_g,\Lambda}$, the functions $t\mapsto D_u(t)$ and $t\mapsto D_s(t)$ are lower semicontinuous and 
$$D_s(t)+D_u(t) = 2 D_u(t) = \textrm{dim}(L_{\mathcal{R}, K, \max_{\phi_g} f}\cap (-\infty, t)) = \textrm{dim}(M_{\mathcal{R}, K, \max_{\phi_g} f}\cap (-\infty, t))$$
\end{proposition}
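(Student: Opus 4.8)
\textbf{Plan for the proof of Proposition \ref{p.Du-lower-sc}.}

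The plan is to combine the upper-semicontinuity already in hand (Proposition \ref{p.Du-upper-sc}) with the lower bound furnished by the subhorseshoe construction of Proposition \ref{p.210'}, and to tie both quantities to the Hausdorff dimensions of the Lagrange and Markov sets using the dimension formula of Proposition \ref{p.dimension-formula}. First I would recall that, by the symmetry $K^s \leftrightarrow K^u$ obtained by reversing the flow, and by Proposition \ref{p.210'}, one already has $D_s(t) = D_u(t)$ for all $t$, and that this common value equals the corresponding limit capacity; so it suffices to prove lower semicontinuity of $t\mapsto D_u(t)$ and to identify $2D_u(t)$ with $\dim(L_{\mathcal{R},K,\max_{\phi_g}f}\cap(-\infty,t))$ and with $\dim(M_{\mathcal{R},K,\max_{\phi_g}f}\cap(-\infty,t))$.

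For lower semicontinuity, fix $t$ and $0<\eta<1$; if $D_u(t)=0$ there is nothing to prove, so assume $D_u(t)>0$. Proposition \ref{p.210'} produces $\delta>0$ and a complete subhorseshoe $K'\subset K_{t-\delta}$ with $\dim((K')^u)>(1-\eta)D_u(t)$. Since $K'\subset K_{t-\delta}\subset K_{s}$ for every $s\in(t-\delta,t]$, monotonicity of limit capacity gives $D_u(s)\geq \dim((K')^u)>(1-\eta)D_u(t)$ for all such $s$; letting $\eta\to 0$ yields $\liminf_{s\to t^-}D_u(s)\geq D_u(t)$, and combined with upper semicontinuity (which controls the limit from both sides) we get continuity from the left and lower semicontinuity overall. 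The same argument applies verbatim to $D_s$.

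For the dimension identities, I would argue as follows. The inclusion $K_t^u \supset (K')^u$ together with $K'\subset K_{t-\delta}$, and the fact that $\max_{\phi_g}f$ restricted to $K'$ is bounded above by $t-\delta$, show that $\max_{\phi_g}f(K')\subset(-\infty,t)$ lies in both $M_{\mathcal{R},K,\max_{\phi_g}f}\cap(-\infty,t)$ and $L_{\mathcal{R},K,\max_{\phi_g}f}\cap(-\infty,t)$ (using that $K'$ is a complete subhorseshoe, so $\ell$ and $m$ agree with the supremum of $F$ along orbits, both $\leq t-\delta$). Because $f\in\mathcal{H}_{\phi_g,\Lambda}$, the gradient of $F=\max_{\phi_g}f$ is transverse to the stable and unstable directions of $\mathcal{R}$ at the points of $K'$ (this is exactly what the construction in Proposition \ref{p.210'} arranged, placing $K'$ inside some $V_{n_0}\setminus L_{n_0}$); since $g\in\mathcal{U}^*$, Proposition \ref{p.dimension-formula} applies to $\widetilde K = K'$ and $H=F$, giving $\dim(F(K')) = \min\{\dim(K'),1\}$. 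Now $\dim(K') = \dim((K')^s)+\dim((K')^u) = 2\dim((K')^u) > 2(1-\eta)D_u(t)$, using that local product structure makes the Hausdorff dimension of a horseshoe the sum of the dimensions of its stable and unstable Cantor sets, and that for these regular Cantor sets Hausdorff dimension equals limit capacity. Hence $\dim(M\cap(-\infty,t))\geq\dim(L\cap(-\infty,t))\geq \dim(F(K'))\geq\min\{2(1-\eta)D_u(t),1\}$ for every $\eta>0$, so both are $\geq\min\{2D_u(t),1\}$. For the reverse inequality I would bound $L\cap(-\infty,t)\subset M\cap(-\infty,t)\subset F(K_t)$ (Remark \ref{r.L<M}) and $\dim(F(K_t))\leq\min\{\dim(K_t),1\}$; since $\dim(K_t) = D_s(t)+D_u(t) = 2D_u(t)$ by the displayed conclusion of Proposition \ref{p.210'} together with the product structure, this gives $\dim(M\cap(-\infty,t))\leq\min\{2D_u(t),1\}$. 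Finally one observes $2D_u(t)\leq 2\dim(K^u)\leq\dim(K)<1$ by hypothesis (since $\dim(\Lambda)<2$ forces $\dim(K)<1$ via Lemma \ref{l.dimK}), so the minima are all equal to $2D_u(t)$ and the chain of inequalities collapses to the asserted equalities.

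\textbf{Main obstacle.} The delicate point is not the soft semicontinuity bookkeeping but ensuring the hypotheses of Proposition \ref{p.dimension-formula} genuinely hold for the subhorseshoe $K'$: one must be certain that $F=\max_{\phi_g}f$ is $C^1$ near $K'$ (guaranteed by $K'\subset V_{n_0}\setminus L_{n_0}$ and $f\in\mathcal{H}_{\phi_g,\Lambda}$) and that its gradient is transverse to both invariant directions \emph{at some point} of $K'$ — which is exactly the property that the construction in Proposition \ref{p.210'} was designed to preserve, but this must be invoked carefully. A secondary subtlety is the passage between limit capacity (in which $D_u, D_s$ are defined) and Hausdorff dimension for the Cantor sets $(K')^u$, $(K')^s$ and for the horseshoe $K'$ itself; this relies on the fact that the stable and unstable Cantor sets arising here are $C^{1+\varepsilon}$ dynamically defined Cantor sets, for which the two notions coincide, and on the product-structure formula $\dim K' = \dim (K')^s + \dim(K')^u$, both of which are standard but should be cited.
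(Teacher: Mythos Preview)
Your lower-semicontinuity paragraph is fine and matches the paper's logic. The genuine gap is in the dimension identity, specifically in the step where you claim that $F(K')\subset L_{\mathcal{R},K,F}\cap(-\infty,t)$ (and $\subset M_{\mathcal{R},K,F}\cap(-\infty,t)$). This inclusion is false in general: $F(K')$ is the set of \emph{pointwise} values $\{F(x):x\in K'\}$, whereas membership in $L$ or $M$ requires the value to be realized as a $\limsup$ or a $\sup$ of $F$ along an entire orbit. For a typical $x\in K'$ there is no orbit $y$ with $\sup_n F(\mathcal{R}^n(y))=F(x)$; already for a single periodic orbit $K'=\{p,\mathcal{R}p,\dots,\mathcal{R}^{n-1}p\}$ one has $M_{\mathcal{R},K',F}$ a single number while $F(K')$ has $n$ values. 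Your parenthetical (``$\ell$ and $m$ agree with the supremum of $F$ along orbits'') shows only that $\ell_{\mathcal{R},F}(x),\, m_{\mathcal{R},F}(x)\le t-\delta$ for $x\in K'$, i.e.\ $L_{\mathcal{R},K',F},\,M_{\mathcal{R},K',F}\subset(-\infty,t)$; it does not give the reverse containment $F(K')\subset L$ that you then feed into Proposition~\ref{p.dimension-formula}.

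The paper closes exactly this gap by invoking an additional construction (Proposition~2.16 of \cite{CMM}): starting from $K'$ one builds, for each $\varepsilon>0$, a further subhorseshoe $K'_\varepsilon\subset K'$ with $\dim(K'_\varepsilon)\ge\dim(K')-\varepsilon$ together with a $C^1$ \emph{height function} $H_\varepsilon$ (not $F$ itself) transverse to the invariant directions, such that $H_\varepsilon(K'_\varepsilon)\subset \ell_{\mathcal{R},F}(K')$. Proposition~\ref{p.dimension-formula} is then applied to $H_\varepsilon$ on $K'_\varepsilon$, yielding $\dim(K')-\varepsilon\le \dim(H_\varepsilon(K'_\varepsilon))\le \dim(L\cap(-\infty,t-\delta))$, and one lets $\varepsilon\to 0$. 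The substance of that cited proposition is precisely the orbit-gluing construction (shadowing a fixed ``maximizing'' block between long excursions into $K'_\varepsilon$) that turns points of a horseshoe into genuine Lagrange values; without it, the bridge from $\dim(K')$ to $\dim(L\cap(-\infty,t))$ is missing.
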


\begin{proof} Consider $t\in\mathbb{R}$ with $D_u(t)>0$ and fix $\eta>0$. By Proposition \ref{p.210'}, we can find $\delta>0$ and a subhorseshoe $K'\subset K_{t-\delta}$ such that 
$$(1-\eta)(D_u(t)+D_s(t)) = 2(1-\eta)D_u(t) \leq \textrm{dim}(K')$$

Since the gradient of $F=\max_{\phi} f$ is transverse to the stable and unstable directions of $K'$ (cf. the proof of Proposition \ref{p.210'} above), we can use Proposition 2.16 in \cite{CMM} to get that, for each $\varepsilon>0$, there exists a subhorseshoe $K'_{\varepsilon}\subset K'$ with $\textrm{dim}(K'_{\varepsilon})\geq \textrm{dim}(K')-\varepsilon$ and a $C^1$ height function $H_{\varepsilon}$ whose gradient is transverse to the stable and unstable directions of $K'_{\varepsilon}$ such that 
$$H_{\varepsilon}(K'_{\varepsilon})\subset\ell_{\mathcal{R},\max_{\phi} f}(K')$$ 

By Proposition \ref{p.dimension-formula}, it follows that 
$$\textrm{dim}(K')-\varepsilon\leq \textrm{dim}(K'_{\varepsilon}) = \textrm{dim}(H_{\varepsilon}(K'_{\varepsilon}))\leq \textrm{dim}(\ell_{\mathcal{R},\max_{\phi} f}(K'))$$ 
for all $\varepsilon>0$. In particular, $\textrm{dim}(K')\leq \textrm{dim}(\ell_{\mathcal{R},\max_{\phi} f}(K'))$. 

Because $K'\subset K_{t-\delta}$, one has $\ell_{\mathcal{R},\max_{\phi} f}(K')\subset L_{\phi_g, \Lambda, f}\cap (-\infty, t-\delta)$. Thus, our discussion so far can be summarized by the following estimates: 
\begin{eqnarray*}
2(1-\eta)D_u(t) &\leq& \textrm{dim}(K') \leq \textrm{dim}(\ell_{\mathcal{R},\max_{\phi} f}(K')) \\ 
&\leq& \textrm{dim}(L_{\mathcal{R}, K, \max_{\phi_g} f}\cap (-\infty, t-\delta)) \leq \textrm{dim}(M_{\mathcal{R}, K, \max_{\phi_g} f}\cap (-\infty, t-\delta)) \\ 
&\leq & \textrm{dim}(\text{max}_{\phi_{g}}f(K_{t-\delta})\leq 2 D_u(t-\delta)
\end{eqnarray*} 

This proves the proposition. 
\end{proof}

\subsection{End of proof of Theorem \ref{t.B}} Let $g\in\mathcal{U}^*$ and $f\in\mathcal{H}_{\phi_g,\Lambda}$. Note that $\mathcal{U}^*$ is a residual subset of $\mathcal{U}$ thanks to Lemma \ref{l.U*generic} and $\mathcal{H}_{\phi_g,\Lambda}$ is dense in $C^s(S_gN,\mathbb{R})$ for $s\geq 4$ thanks to Lemma \ref{l.H}. 

By Propositions \ref{p.Du-upper-sc} and \ref{p.Du-lower-sc}, the function 
$$t\mapsto D_s(t) = D_u(t) = \frac{1}{2}\textrm{dim}(L_{\mathcal{R}, K, \max_{\phi_g} f}\cap (-\infty, t)) = \frac{1}{2}\textrm{dim}(M_{\mathcal{R}, K, \max_{\phi_g} f}\cap (-\infty, t))$$ 
is continuous. 

This completes the proof of Theorem \ref{t.B} (and, \emph{a fortiori}, Theorem \ref{t.A}). 

\section*{Acknowledgement}

We would like to thank Carlos Matheus for very helpful discussions and suggestions during the preparation of this paper.

\end{document}